\newtheorem{thm}{Theorem}[section]
\newtheorem{cor}[thm]{Corollary}
\newtheorem{prop}[thm]{Proposition}
\newtheorem{defn}[thm]{Definition}
\newtheorem{exam}[thm]{Example}
\newtheorem{rem}[thm]{Remark}
\numberwithin{equation}{section}
\DeclareFontFamily{OT1}{pzc}{}
\DeclareFontShape{OT1}{pzc}{m}{it}{<-> s * [1.10] pzcmi7t}{}
\DeclareMathAlphabet{\mathpzc}{OT1}{pzc}{m}{it}
\DeclareMathOperator{\Der}{Der}
\DeclareMathOperator{\Hom}{Hom}
\DeclareMathOperator{\Orb}{Orb} 
\DeclareMathOperator{\GL}{GL} \DeclareMathOperator{\Ann}{Ann}
 \DeclareMathOperator{\tr}{tr}
\DeclareMathOperator{\ad}{ad} \DeclareMathOperator{\gr}{gr}
\DeclareMathOperator{\Rad}{Rad} \DeclareMathOperator{\spann}{span}
\DeclareMathOperator{\Lie}{Lie}
\DeclareMathOperator{\Leib}{\mathpzc{Leib}}
\begin{document}


\title[Some irreducible components of the variety of Leibniz algebras]
{Some irreducible components of the variety of complex $n+1$-dimensional Leibniz algebras}
\author{A.Kh. Khudoyberdiyev}
\address{[A.Kh. Khudoyberdiyev] Institute of Mathematics, National University of Uzbekistan, Tashkent, 100125, Uzbekistan.} \email{khabror@mail.ru}
\author{M. Ladra}
\address{[M. Ladra] Department of Algebra, University of Santiago de Compostela, 15782, Spain.}
\email{manuel.ladra@usc.es}
\author{K.K. Masutova}
\address{[K.K. Masutova] Institute of Mathematics, National University of Uzbekistan, Tashkent, 100125, Uzbekistan.} \email{kamilyam81@mail.ru}
\author{B.A. Omirov}
\address{[B.A. Omirov] Institute of Mathematics, National University of Uzbekistan, Tashkent, 100125, Uzbekistan.}
\email{omirovb@mail.ru}


\begin{abstract}

In the present paper we indicate some Leibniz algebras whose
closures of orbits under the natural action of $\GL_n$ form an irreducible
component of the variety of complex $n$-dimensional Leibniz
algebras. Moreover, for these algebras we calculate the bases of
their  second groups of cohomologies.
\end{abstract}

\subjclass[2010]{17A32, 17B30, 16E40, 13D10, 14D06, 14L30}

\keywords{Leibniz algebra, solvable algebra,
nilradical, filiform algebra, degeneration, variety of algebras,
irreducible component, Leibniz 2-cocycle.}

\maketitle

\section{Introduction}

Jean-Louis Loday introduced Leibniz algebras because of
considerations in algebraic K-theory \cite{Loday}. We know that the
Lie algebra homology involves the Chevalley-Eilenberg chain
complex, which in turns involves  exterior powers of the Lie
algebra. Loday found that there is a non-antisymmetric generalization
where roughly speaking one has the tensor and not the exterior
powers of the Lie algebra in the complex; this new complex defines
the Leibniz homology of Lie algebras. The Leibniz homology is
related to the Hochschild homology in the same way the Lie algebra
homology is related to the cyclic homology.

In many cases where the Leibniz algebra involved may depend on the parameters it is useful to know the structure of the set of all Leibniz algebras of a given dimension.
The aim of this work  is to establish some results  from a geometrical point of view in the
study of Leibniz algebras. Any Leibniz algebra law is considered
as a point of an affine algebraic variety defined by the polynomial equations coming from
the Leibniz identity for a given basis. This way provides an description of the difficulties
in classification problems referring to the classes of nilpotent and solvable Leibniz algebras.
 The orbits relative to the action of the
general linear group correspond to  the isomorphism classes  of Leibniz
algebras  and so classification problems (up to isomorphism) can be reduced to the
classification of these orbits.
An affine algebraic variety is a union of a finite number of
irreducible components and the Zariski open orbits provide interesting classes of Leibniz algebras
to be classified. The Leibniz algebras of this class are called rigid.

The research of varieties of Lie algebras
laws over the field $\mathbb{C}$ complex numbers have been extensively studied, establishing various important structural results and properties.
On the contrary, the problem for varieties of Leibniz algebras  has not been considered in detail. The research of varieties of Lie and Leibniz algebra
laws is essentially based on the cohomological study of Leibniz algebras and on deformation
theory. Deformations of arbitrary rings and associative algebras, results about rigid Lie algebras and related cohomology questions,
 were first investigated in 1964 by Gerstenhaber \cite{Gersten}.
Later, the notion of deformation was applied to Lie algebras by Nijenhuis and Richardson \cite{Nijen},
 where they transform the topological problem related to rigidity into a cohomological problem,
  proving that a Lie algebra  $\mathfrak{g}$ is rigid if the second group $H_2(\mathfrak{g}, \mathfrak{g})$ of the Chevalley-Eilenberg cohomology vanishes.

 In this paper, we are concerned with the structure of the variety $\Leib_{n+1}$, the variety of the $(n+1)$-dimensional Leibniz algebras,
  in particular, with answers to the following question: What irreducible components do $\Leib_{n+1}$ fall into? The answers to this question
   would allow to describe partially the structures of some  Leibniz algebras of dimension $n+1$.
  We shall obtain  general results on some irreducible components of the variety of finite-dimensional Leibniz algebras
  and indicate representatives of solvable Leibniz algebras, whose closures of orbits form irreducible components.
 We hope to develop this line of research in the next works.

The paper is organized as follows. In Section~\ref{S:prel} we  recall some necessary notions about  Leibniz algebras, cohomology and degenerations of  Leibniz algebras.
In Section~\ref{S:irred} we describe derivations of the solvable Leibniz algebras whose nilradical
is a filiform algebra of type  $F_n^1$ (see below Theorem~\ref{T:fil}), present (1,1)-invariants for
various types of solvable algebras and give representatives of irreducible components  of the variety $\Leib_{n+1}$ of the $(n+1)$-dimensional Leibniz algebras.
Finally, in the last subsection, give  descriptions of the second cohomology group of the solvable Leibniz algebras whose nilradical is a filiform algebra of type  $F_n^1$.

Throughout of the paper,  we denote by $L$ a finite-dimensional Leibniz algebra
over the field of complex numbers. Moreover, in the  multiplication
table of a Leibniz algebra the omitted products and in the
expansion of 2-cocycles the omitted values are assumed to be zero.

\section{Preliminaries}\label{S:prel}

In this section we give necessary definitions on Leibniz algebras, cohomology,
degenerations  and known results.

We present the definition of the main object of our study.
\begin{defn} \cite{Loday}
A Leibniz algebra over a field $\mathbb{F}$ is a vector space
$L$ equipped with a bilinear map, called bracket,
\[[-,-] \colon  L \times  L \rightarrow  L \]
satisfying the Leibniz identity:
\[ [x,[y,z]]=[[x,y],z]-[[x,z],y], \]
for all $x,y,z \in  L$.
\end{defn}

The set $\Ann_r(L)=\{x \in L \ | \ [y,x]=0, \ \forall y \in L\}$ is
called \emph{the right annihilator of the Leibniz algebra $L$}.
Note that $\Ann_r(L)$ is an ideal of $L$ and for any $x, y \in L$,
the elements $[x,x]$, $[x,y]+ [y,x]\in \Ann_r(L)$.

\subsection{Solvable Leibniz algebras}

For a Leibniz algebra $L$ we consider the following \emph{central lower}
and \emph{derived series}:
\begin{align*}
L^1 & =L,\quad L^{k+1}=[L^k,L^1],    \ \quad \qquad k \geq 1; \\
L^{[1]}  & = 1,  \ \quad L^{[s+1]} = [L^{[s]}, L^{[s]}], \qquad s \geq 1.
\end{align*}
\begin{defn} \label{defn22}
A Leibniz algebra $L$ is said to be nilpotent (respectively,
solvable), if there exists $n\in\mathbb N$ ($m\in\mathbb N$) such
that $L^{n}=0$ (respectively, $L^{[m]}=0$). The minimal number $n$
(respectively, $m$) with such property is said to be the index of
nilpotency (respectively, of solvability) of the algebra $L$.
\end{defn}

Obviously, the index of nilpotency of an $n$-dimensional nilpotent
Leibniz algebra is not greater than $n+1$.

\begin{defn} An $n$-dimensional Leibniz algebra is said to be null-filiform if $\dim L^i=n+1-i, \ 1\leq i \leq n+1$.
\end{defn}

Remark that a null-filiform Leibniz algebra has maximal index of
nilpotency.

\begin{thm}[\cite{Ayup}] \label{thm24} An arbitrary $n$-dimensional null-filiform Leibniz algebra is isomorphic to the algebra:
\[NF_n: \quad [e_i, e_1]=e_{i+1}, \qquad 1 \leq i \leq n-1,\]
where $\{e_1, e_2, \dots, e_n\}$ is a basis of the algebra $NF_n$.
\end{thm}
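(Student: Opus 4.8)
The plan is to show that the null-filiform condition forces $L$ to be generated, as an algebra, by a single element, and that right multiplication by this generator marches through the lower central series producing exactly the basis and relations of $NF_n$. First I would record the structural consequence of the definition: since $\dim L^i = n+1-i$, every successive quotient $L^i/L^{i+1}$ is one-dimensional for $1 \le i \le n$, and $L^{n+1}=0$. In particular $\dim L/L^2 = 1$, so I may fix a vector $e_1 \in L \setminus L^2$ and define recursively $e_{i+1} := [e_i, e_1]$ for $i \ge 1$.

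The technical backbone is the filtration estimate $[L^i, L^j] \subseteq L^{i+j}$, which I would prove by induction on $j$: the case $j=1$ is the definition $L^{i+1}=[L^i,L^1]$, and the inductive step follows by applying the Leibniz identity $[x,[y,z]] = [[x,y],z] - [[x,z],y]$ with $x \in L^i$, $y \in L^j$, $z \in L^1$, noting both resulting terms lie in $L^{i+j+1}$. Combining this with $L^1 = \langle e_1\rangle + L^2$ gives $L^{i+1} = [L^i, e_1] + [L^i, L^2] \subseteq [L^i, e_1] + L^{i+2}$. Hence right multiplication by $e_1$ descends to a well-defined surjection $L^i/L^{i+1} \to L^{i+1}/L^{i+2}$ (it is well defined because $[L^{i+1},e_1]\subseteq L^{i+2}$), and since both quotients are one-dimensional for $1 \le i \le n-1$, this map is an isomorphism. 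Starting from $e_1 \notin L^2$ and applying it repeatedly shows $e_i \in L^i \setminus L^{i+1}$ for every $i$; because the quotients are one-dimensional, a standard minimal-index argument then shows $\{e_1, \dots, e_n\}$ is a basis adapted to the filtration.

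With the basis fixed, the relations $[e_i, e_1] = e_{i+1}$ for $1 \le i \le n-1$ hold by construction, and $[e_n, e_1] = e_{n+1} \in L^{n+1} = 0$. It remains to kill all products $[e_i, e_j]$ with $j \ge 2$, which I would handle by induction on $j$. The base case is immediate from the Leibniz identity: $[e_i, e_2] = [e_i,[e_1,e_1]] = [[e_i,e_1],e_1] - [[e_i,e_1],e_1] = 0$. For the step, $[e_i, e_{j+1}] = [e_i,[e_j,e_1]] = [[e_i,e_j],e_1] - [[e_i,e_1],e_j]$, and both terms vanish since $[e_i,e_j] = 0$ and $[e_{i+1},e_j] = 0$ by the inductive hypothesis. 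This produces precisely the multiplication table of $NF_n$, so the linear map sending the constructed basis to the standard basis is the desired isomorphism.

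I expect the main obstacle to be the second step: verifying that the null-filiform condition genuinely forces right multiplication by the single generator $e_1$ to traverse the entire filtration without degeneracy. The estimate $[L^i,L^j]\subseteq L^{i+j}$ is what guarantees that the stray contribution $[L^i, L^2]$ lands two steps deeper and cannot interfere with the relevant one-dimensional quotient; this is the crux that makes both the basis construction and the subsequent vanishing of the remaining products go through cleanly.
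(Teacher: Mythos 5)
Your argument is correct and is essentially the standard proof of this result (the paper itself gives no proof, simply citing \cite{Ayup}; the argument there runs along the same lines, via the observation that a null-filiform algebra is one-generated). All the key points check out: the filtration estimate $[L^i,L^j]\subseteq L^{i+j}$ is proved correctly by induction on $j$ using the Leibniz identity; the surjectivity of right multiplication by $e_1$ on the one-dimensional quotients $L^i/L^{i+1}\to L^{i+1}/L^{i+2}$ correctly yields $e_i\in L^i\setminus L^{i+1}$ and hence an adapted basis; and the vanishing of $[e_i,e_j]$ for $j\geq 2$ follows cleanly by induction on $j$ from $e_2=[e_1,e_1]$ and $e_{j+1}=[e_j,e_1]$ via the Leibniz identity.
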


From Theorem~\ref{thm24} it is easy to see that a nilpotent
Leibniz algebra is null-filiform if and only if it is a
one-generated algebra, i.e., an algebra generated by unique
element. Note that this notion has no sense in the Lie algebras
case, because they are at least two-generated.

It should be noted that  the sum of any two nilpotent (solvable)
ideals is nilpotent (solvable).

\begin{defn} The  maximal nilpotent (solvable) ideal of a Leibniz algebra is said to be a nilradical (solvable radical) of the algebra.
\end{defn}

Below, we present the  description of solvable Leibniz algebras
whose nilradical is isomorphic to the algebra $NF_n$.

\begin{thm}[\cite{Cas1}] \label{thm26} Let $R$ be a solvable Leibniz algebra whose nilradical is $NF_n$.
Then there exists a basis $\{e_1, e_2, \dots, e_n, x\}$ of the
algebra $R$ such that the multiplication table of $R$ with respect
to this basis has the following form:
\[RNF_n:\left\{ \begin{aligned}
{}[e_i,e_1] & =e_{i+1}, && 1\leq i\leq n-1,\\
 [x,e_1]& =-e_1, && \\
[e_i,x] & =ie_i, && 1\leq i\leq n.
\end{aligned}\right.\]
\end{thm}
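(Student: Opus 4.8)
The plan is to exploit the rigidity of the nilradical $N=NF_n$ together with the fact that, in any Leibniz algebra, the right multiplication $R_x\colon y\mapsto[y,x]$ is a derivation (a direct rewriting of the Leibniz identity). First I would reduce to the case $\dim R=n+1$, that is, to codimension one of the nilradical. This rests on the standard structural bound for solvable Leibniz algebras: the dimension of a complementary subspace to the nilradical does not exceed the maximal number of nil-independent derivations of $N$. Computing the derivations of $NF_n$ explicitly---using that $e_1$ generates $NF_n$ (the remark following Theorem~\ref{thm24}), so that a derivation $d$ is determined by $d(e_1)$---one finds $d(e_i)=i\alpha\,e_i+(\text{terms in }e_{i+1},\dots,e_n)$, where $\alpha$ is the $e_1$-coefficient of $d(e_1)$. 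Hence the only source of non-nilpotency is $\alpha\neq 0$, the maximal number of nil-independent derivations is exactly one, and $R=N\oplus\langle x\rangle$ for a single outer generator $x$.

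Next I would analyse $d:=R_x|_N$. Since $x$ lies outside the nilradical and $R$ is solvable, $d$ must be non-nilpotent, so $\alpha\neq 0$; by the computation above its eigenvalues are $\alpha,2\alpha,\dots,n\alpha$, which are distinct in characteristic zero, so $d$ is diagonalizable. Rescaling $x$ by $\alpha^{-1}$ and passing to an eigenbasis, while using the residual scaling freedom to restore the canonical relations $[e_i,e_1]=e_{i+1}$, I would arrive at $[e_i,x]=ie_i$ for all $i$. The delicate point is that this change of basis must simultaneously diagonalize $d$ and preserve the multiplication table of $NF_n$, which is possible precisely because the eigenvalues are simple.

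Then I would determine the remaining products from the Leibniz identity alone, together with the relation $[x,y]+[y,x]\in\Ann_r(R)$. Applying the identity to $[x,[e_i,e_1]]$ produces the recursion $[x,e_{i+1}]=[[x,e_i],e_1]-[[x,e_1],e_i]$; the case $i=1$ gives $[x,e_2]=0$ at once, and since $\Ann_r(R)=\langle e_2,\dots,e_n\rangle$ (which I would verify directly) an induction yields $[x,e_i]=0$ for every $i\geq 2$. For the last generator, $[x,e_1]+[e_1,x]\in\Ann_r(R)$ and $[e_1,x]=e_1$ give $[x,e_1]=-e_1+w$ with $w\in\langle e_2,\dots,e_n\rangle$; a consistency relation from the Leibniz identity kills the $e_2$-component of $w$, and the substitution $x\mapsto x+\sum_{j\geq 2}c_j e_j$ (with $c_1$ kept zero so that $[e_i,x]=ie_i$ survives) absorbs the rest, leaving $[x,e_1]=-e_1$. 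Finally, writing $[x,x]=\sum_{i\geq 2}t_i e_i$ and running $[x,[x,e_1]]$ through the identity forces $t_2=\dots=t_{n-1}=0$, and the substitution $x\mapsto x-\tfrac{t_n}{n}e_n$ removes the surviving $e_n$-term, so $[x,x]=0$. Collecting the relations reproduces the table $RNF_n$.

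I expect the main obstacle to be the first step, establishing that the nilradical has codimension one: this is where the genuine structure theory of solvable Leibniz algebras enters through the nil-independent derivation bound, whereas the remaining steps are careful but essentially mechanical normalizations driven by the Leibniz identity and by successive changes of the adapted basis. A secondary technical nuisance is ensuring that the diagonalizing change of basis in the second step does not corrupt the canonical form of $NF_n$.
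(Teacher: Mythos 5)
The paper quotes this theorem from \cite{Cas1} without proof, so the only comparison available is with the standard argument from that source, which your proposal reconstructs faithfully: codimension one via the nil-independent-derivation bound, non-nilpotency and diagonalizability of $R_x|_N$ using that $NF_n$ is one-generated with simple spectrum $\alpha,2\alpha,\dots,n\alpha$, and normalization of $[x,e_i]$, $[x,e_1]$ and $[x,x]$ via the Leibniz identity, the right annihilator, and shifts of $x$ inside the nilradical. I checked the steps you leave implicit --- in particular that regenerating the basis from the eigenvalue-$\alpha$ eigenvector of $R_x$ automatically yields an eigenbasis preserving the $NF_n$ table (because $R_x$ is a derivation), and that the identity for the triple $(x,x,e_1)$ both kills the $e_2$-component of $[x,e_1]$ and pins $[x,x]$ to a multiple of $e_n$ --- and they are sound.
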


\begin{defn}
An $n$-dimensional Leibniz algebra $L$ is said to be filiform if $
\dim L^i=n-i$ for $2\leq i \leq n$.
\end{defn}

Now let us define a natural graduation for a filiform Leibniz
algebra.

\begin{defn} Given a filiform Leibniz algebra $L$, put $L_i=L^i/L^{i+1}, \ 1 \leq i\leq n-1$, and $\gr(L) = L_1 \oplus L_2\oplus\dots \oplus L_{n-1}$.
 Then $[L_i,L_j]\subseteq L_{i+j}$ and we obtain the graded algebra $\gr(L)$. If $\gr(L)$ and $L$ are isomorphic, then we say that the algebra $L$ is naturally graded.
\end{defn}

Thanks to  \cite{Ayup} and \cite{Ver} it is well known that there
are three types of naturally graded filiform Leibniz algebras. In
fact, the third type encloses the class of naturally graded
filiform Lie algebras.

\begin{thm} \label{T:fil}
 Any complex $n$-dimensional naturally graded filiform Leibniz algebra is isomorphic to one of the following pairwise non-isomorphic algebras:
\begin{align*}
F_n^1: &  \ [e_i,e_1]=e_{i+1}, \  2\leq i \leq {n-1},\\
F_n^2: &  \ [e_1,e_1]=e_{3}, \ [e_i,e_1]=e_{i+1}, \  3\leq i \leq {n-1},\\
F_n^3(\alpha):  &  \
 \left\{\begin{array}{lll}
[e_i,e_1]=-[e_1,e_i]=e_{i+1}, &
2\leq i \leq {n-1},\\[1mm]
[e_i,e_{n+1-i}]=-[e_{n+1-i},e_i]=\alpha (-1)^{i+1}e_n, & 2\leq
i\leq n-1,
\end{array} \right.
\end{align*}
where $\alpha\in\{0,1\}$ for even $n$ and $\alpha=0$ for odd $n$.
\end{thm}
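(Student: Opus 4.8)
The plan is to pass to the associated graded algebra and then classify by a single invariant of the grading. Since $L$ is naturally graded we may assume $L=\gr(L)=L_1\oplus\cdots\oplus L_{n-1}$ with $[L_i,L_j]\subseteq L_{i+j}$. The filiform condition $\dim L^i=n-i$ for $2\le i\le n$ gives $\dim L_1=2$ and $\dim L_i=1$ for $2\le i\le n-1$, so $L$ is generated by $L_1$ and every higher layer is one-dimensional. I would first record the structural fact that in a Leibniz algebra every right multiplication $R_a\colon w\mapsto[w,a]$ is a derivation (this is the Leibniz identity rewritten as $[[x,y],a]=[x,[y,a]]+[[x,a],y]$), so each $R_a$ with $a\in L_1$ is a degree $+1$ derivation of the grading.

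Next I would fix an adapted basis. Because $L^{k+1}=[L^k,L]$ and the layers are one-dimensional for $k\ge 2$, the relation $L_{k+1}=[L_k,L_1]$ forces right multiplication by a suitable $e_1\in L_1$ to map $L_k$ isomorphically onto $L_{k+1}$ for all $2\le k\le n-2$ (a generic element of $L_1$ works, since for each $k$ the good elements form a Zariski-open set; this is the standard choice of a shift, or characteristic, element). Fixing such an $e_1$, choosing $0\ne e_3\in L_2$, and setting $e_{i+1}:=[e_i,e_1]=R_{e_1}(e_i)$ for $3\le i\le n-1$ produces a basis $\{e_3,\dots,e_n\}$ of $L_2\oplus\cdots\oplus L_{n-1}$, which I complete to a basis $\{e_1,e_2\}$ of $L_1$. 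After this the only undetermined products are the images of $L_1\times L_1$ in $L_2$ together with the right products by $e_2$, and the derivation property of $R_{e_1}$ lets any relation imposed on $L_1$ propagate up the chain.

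The key invariant is the squaring map. On the graded algebra the restriction of $x\mapsto[x,x]$ to $L_1$ takes values in $L_2$ (and indeed in $\Ann_r(L)$), so it is a quadratic form $q\colon L_1\to L_2$ on a two-dimensional space with values in a line; its rank is preserved by graded isomorphisms, and since any algebra isomorphism preserves the lower central filtration it is an isomorphism invariant of $L$. One computes $\operatorname{rank} q=2$ for $F_n^1$, $\operatorname{rank} q=1$ for $F_n^2$, and $\operatorname{rank} q=0$ for $F_n^3(\alpha)$, the rank-zero case being exactly the antisymmetric, i.e. Lie, case. I therefore split into three cases according to $\operatorname{rank} q\in\{0,1,2\}$, in each choosing the shift element $e_1$ and its complement $e_2$ adapted to the normal form of $q$ (hyperbolic, rank one, or zero), and then run the derivation/Leibniz relations to show that all remaining structure constants are forced, yielding $F_n^1$, $F_n^2$, or an antisymmetric algebra respectively. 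In the rank-zero case $L$ is a naturally graded filiform Lie algebra, so I invoke Vergne's classification (\cite{Ver}): these are precisely $F_n^3(0)$ and, for even $n$, $F_n^3(1)$, which supplies both the extra brackets $[e_i,e_{n+1-i}]=\alpha(-1)^{i+1}e_n$ and the restriction $\alpha\in\{0,1\}$ for even $n$, $\alpha=0$ for odd $n$. Pairwise non-isomorphism is then immediate, since the three families have distinct values of $\operatorname{rank} q$, while within $F_n^3$ the two values of $\alpha$ are separated by the usual invariant distinguishing the model filiform Lie algebra from $Q_n$.

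I expect the main obstacle to be the case-by-case verification that the Leibniz identity forces every structure constant once $q$ is normalized, and in particular checking that the only surviving freedom in the rank-zero case is the single parameter of Vergne's $Q_n$, with the correct parity constraint on $\alpha$, since the extra cocycle producing the term $\alpha(-1)^{i+1}e_n$ exists only in even dimension. The propagation step, where a relation among the two degree-one generators must be pushed consistently through all higher layers using that $R_{e_1}$ is a derivation, carries the heaviest bookkeeping but should be routine.
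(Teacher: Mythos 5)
The paper offers no proof of Theorem~\ref{T:fil} to compare against: the statement is imported wholesale from \cite{Ayup} and \cite{Ver} (the sentence preceding it says as much), so your argument is necessarily a reconstruction rather than a variant of anything in the text. Taken on its own terms, your strategy is sound and is a cleaner organizing principle than the usual direct analysis of structure constants: right multiplications are indeed derivations under the paper's bracket convention, the rank of the squaring form $q(x)=[x,x]$ on the two-dimensional layer $L_1$ with values in the line $L_2$ is a graded-isomorphism invariant taking the values $2,1,0$ on $F_n^1$, $F_n^2$, $F_n^3(\alpha)$ respectively, the trichotomy is automatically exhaustive, and pairwise non-isomorphism across the three families comes for free; deferring the rank-zero case to Vergne's classification is legitimate and is effectively what the paper's citation of \cite{Ver} does. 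Two places where the sketch is thinner than it should be. First, ``rank zero implies Lie'' is not formal for Leibniz algebras: antisymmetry of the brackets of degree-one elements must be propagated to all of $L$ by an induction along $e_{i+1}=[e_i,e_1]$ using the Leibniz identity (this does work in the filiform graded setting, but it is a computation, not a consequence of $L$ being generated in degree one). Second, you normalize $e_1$ twice --- once as a characteristic (shift) element with $R_{e_1}\colon L_k\to L_{k+1}$ bijective for $k\ge 2$, and once as part of the normal form of $q$ --- without checking that the two normalizations are simultaneously achievable; in the rank-two case, for instance, $e_1$ must be isotropic, and one has to rule out that both isotropic lines of $q$ lie in the bad locus for the shift property. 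Finally, a remark that would collapse most of your deferred bookkeeping: whenever $q\neq 0$ one has $L_2\subseteq \Ann_r(L)$ (squares lie in the right annihilator), whence $[x,e_i]=0$ for all $i\ge 3$ by induction, so in the two non-Lie cases the only surviving products are the right multiplications by $e_1$ and $e_2$ together with the brackets on $L_1$, and the verification that everything is forced becomes short.
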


The following theorem decomposes  all $n$-dimensional filiform
Leibniz algebras into  three families of algebras.

\begin{thm}[\cite{Ayup}]\label{th2.5}
 Any complex $n$-dimensional filiform Leibniz algebra admits a basis $\{e_1, e_2, \dots, e_n\}$
 such that the table of multiplication of the algebra has one of the following forms:

$F_1(\alpha_4, \dots, \alpha_n,\theta)=\left\{\begin{array}{ll}
[e_i,e_1]=e_{i+1}, & \  2\leq i \leq {n-1},\\[1mm]
[e_1,e_2]=\theta e_n, & \\[1mm]
[e_j,e_2]=\alpha_4e_{j+2} + \alpha_5e_{j+3}+ \dots +
\alpha_{n+2-j}e_n, & \ 2\leq j \leq {n-2},
\end{array} \right.$ \\[1mm]

$F_2(\beta_4, \dots, \beta_n,\gamma)=\left\{\begin{array}{ll}
[e_1,e_1]=e_{3}, \\[1mm]
[e_i,e_1]=e_{i+1}, & \  3\leq i \leq {n-1},\\[1mm]
[e_1,e_2]=\beta_4e_{4} + \beta_5e_{5}+\dots+ \beta_{n}e_{n}, \\[1mm]
[e_2,e_2]= \gamma e_{n},\\[1mm]
[e_j,e_2]=\beta_4e_{j+2} + \beta_5e_{j+3}+\dots+
\beta_{n+2-j}e_{n}, & \ 3\leq j \leq {n-2},
\end{array} \right.$ \\

$F_3(\theta_1,\theta_2,\theta_3)=
\left\{\begin{array}{lll}
[e_i,e_1]=e_{i+1}, &
2\leq i \leq {n-1},\\[1mm]
[e_1,e_i]=-e_{i+1}, & 3\leq i \leq {n-1}, \\[1mm]
[e_1,e_1]=\theta_1e_n, &   \\[1mm]
[e_1,e_2]=-e_3+\theta_2e_n, & \\[1mm]
[e_2,e_2]=\theta_3e_n, &  \\[1mm]
[e_i,e_j]=-[e_j,e_i] \in \spann<e_{i+j+1}, e_{i+j+2}, \dots , e_n>, &
2\leq i < j \leq {n-1},\\[1mm]
[e_i,e_{n+1-i}]=-[e_{n+1-i},x_i]=\alpha (-1)^{i+1}x_n, & 2\leq
i\leq n-1,
\end{array} \right.$ \\
where  $\alpha\in\{0,1\}$ for even $n$ and $\alpha=0$ for odd $n$.
\end{thm}

Below we present the  description of solvable Leibniz algebras
whose nilradical is isomorphic to the algebra $F_n^1$.

\begin{thm}[\cite{Cas2}] \label{thm33} An arbitrary $(n+1)$-dimensional solvable Leibniz algebra with nilradical $F_n^1$
is isomorphic to one of the following pairwise non-isomorphic algebras:
\[R_1: \left\{\begin{aligned}
{}[e_i,e_1]&=e_{i+1}, && 2\leq i\leq n-1,\\
[x,e_1]& =-e_1-e_2, \\
[e_1,x]& =e_1, \\
[e_i,x]& =(i-1)e_i, && 2\leq i\leq n,
\end{aligned}\right. \qquad R_2(\alpha): \left\{\begin{aligned}
{}  [e_i,e_1]& =e_{i+1}, && 2\leq i\leq n-1,\\
[x,e_1] & =-e_1, \\
[e_1,x]& =e_1,   \\
 [e_i,x] & =(i-1+\alpha)\, e_i, && 2\leq i\leq n,
\end{aligned}\right.\]
\[R_3: \left\{\begin{aligned}
{} [e_i,e_1]& =e_{i+1}, && 2\leq i\leq n-1, & \\
[x,e_1] & =-e_1, \\
[e_1,x]& =e_1,   \\
[e_i,x] & =(i-n)\, e_i, && 2\leq i\leq n, \\
[x,x]&=e_n, \,
\end{aligned}\right.  \qquad  R_4: \left\{\begin{aligned}
{} [e_i,e_1]& =e_{i+1}, && 2\leq i\leq n-1, & \\
[x,e_1] & =-e_1, \\
[e_1,x]& =e_1+e_n,   \\
[e_i,x] & =(i+1-n)\, e_i, && 2\leq i\leq n, \\
[x,x]&=-e_{n-1}, \,
\end{aligned}\right.\]
\[R_5(\alpha_i)=R_5(\alpha_4, \dots, \alpha_{n}):
\left\{\begin{aligned}
{} [e_1,e_1]& =e_3, \\
 [e_i,e_1]&=e_{i+1}, && 2\leq i\leq n-1,\\
 [e_1,x]&=e_2+\sum\limits_{i=4}^{n-1}\alpha_i\, e_i, \\
 [e_i,x]&=e_i+\sum\limits_{j=i+2}^n\alpha_{j-i+2}\, e_j, && 2\leq i\leq n \,.
\end{aligned}\right.\]
Moreover,  the first non-vanishing parameter $\{\alpha_4, \dots,
\alpha_{n}\}$ in the algebra $R_5(\alpha_4, \dots, \alpha_{n})$
can be scaled to 1.
\end{thm}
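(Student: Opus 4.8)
The plan is to treat $R$ as a one-dimensional extension of its nilradical and to read off the admissible multiplication tables from the derivations of $F_n^1$. Since $\dim R=n+1$ while $\dim N=n$ for $N=F_n^1$, we may fix $x\in R\setminus N$ and write $R=N\oplus\langle x\rangle$ as vector spaces. The nilradical is a two-sided ideal, so $[N,x]\subseteq N$ and $[x,N]\subseteq N$, and the Leibniz identity rewritten as $[[a,b],x]=[[a,x],b]+[a,[b,x]]$ shows that the right multiplication $R_x\colon a\mapsto[a,x]$ restricts to a derivation $d:=R_x|_N$ of $N$. The first task is therefore to describe $\Der(F_n^1)$, which is also needed in Section~\ref{S:irred}: expanding a derivation in the basis $\{e_1,\dots,e_n\}$ of $F_n^1$ (Theorem~\ref{T:fil}) and imposing compatibility with $[e_i,e_1]=e_{i+1}$ forces the diagonal derivations to satisfy $\lambda_i=\lambda_2+(i-2)\lambda_1$ for $i\ge 2$, so that the maximal torus is two-dimensional with free weights $\lambda_1$ (the weight of $e_1$) and $\lambda_2$.

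Next I would appeal to the Jordan decomposition $d=d_s+d_n$ of the induced outer derivation. Because $N$ is the maximal nilpotent ideal, $d$ must be non-nilpotent (nil-independent), so its semisimple part is nonzero, and after replacing $x$ by a suitable element of $x+N$ and rescaling we may bring $d$ to a normal form relative to the torus above. The coarse dichotomy separating the families is whether the left multiplication $L_x\colon a\mapsto[x,a]$ acts nontrivially on the generators ($R_1$--$R_4$) or vanishes ($R_5$); in the latter case $e_1$ fails to be a weight vector, $d$ carries a single off-diagonal coupling, and after adapting the nilradical basis to the $F_n^1$ form one recovers precisely the table of $R_5$. Within the diagonalizable regime the essential invariant is the ratio $\lambda_2/\lambda_1$: normalizing $\lambda_1=1$ yields the generic family $R_2(\alpha)$ with $\lambda_i=i-1+\alpha$. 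The remaining algebras occur at resonant weights. A self-product $[x,x]$, which by the preliminaries lies in $\Ann_r(R)$, can be nonzero only when its target basis vector has weight $0$; this pins the weights $i-n$ (giving $[x,x]=e_n$ in $R_3$) and $i+1-n$ (giving $[x,x]=-e_{n-1}$ in $R_4$), while a coincidence of weights permits the extra off-diagonal couplings appearing in $R_1$ and $R_4$, realized in the left, respectively right, multiplication by $x$.

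Once the normal form of $d$ is fixed, the surviving structure constants --- the left products $[x,e_i]$ and the self-product $[x,x]$ --- are determined by imposing the Leibniz identity on every triple drawn from $\{e_1,\dots,e_n,x\}$. These identities force $[x,x]\in\Ann_r(R)$, couple $[x,e_1]$ to $[e_1,x]$ through $[e_1,x]+[x,e_1]\in\Ann_r(R)$, and propagate each admissible choice up the lower central filtration, leaving only finitely many free parameters. The residual change-of-basis freedom --- the stabilizer, in the automorphism group of $N$, of the chosen $d$, together with the shifts $x\mapsto x+n_0$ and scalings --- is then spent normalizing these parameters, in particular scaling the first non-vanishing $\alpha_i$ in $R_5$ to $1$. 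Finally I would separate the five families by invariants insensitive to the choice of basis: vanishing or not of $[x,x]$, vanishing or not of $L_x$, the Jordan type of $R_x|_N$, and the multiset of weights of $R_x$ on $N$.

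The step I expect to be the main obstacle is the resonance analysis. At coincident weights the Leibniz identity opens up exactly the extra off-diagonal couplings and nonzero self-products recorded in $R_1,R_3,R_4$, and one must verify with care which of these can be absorbed by the surviving automorphisms of $N$ and which are genuine invariants. This bookkeeping is what cleanly separates the sporadic small families from the generic $R_2(\alpha)$, and it is where the argument is most delicate.
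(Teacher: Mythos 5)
The paper does not actually prove this theorem: it is quoted verbatim from the reference \cite{Cas2}, so there is no internal proof to compare your attempt against. Your sketch follows essentially the route taken in that reference --- realizing $R$ as $F_n^1\oplus\langle x\rangle$ with $d=R_x|_{F_n^1}$ a non-nilpotent derivation, computing the two-dimensional maximal torus of $\Der(F_n^1)$ (with $\lambda_i=\lambda_2+(i-2)\lambda_1$), spending the freedom $x\mapsto x+n_0$ and rescaling to normalize $d$, and isolating the resonant weight configurations that produce $[x,x]\neq 0$ in $R_3$, $R_4$ and the off-diagonal couplings of $R_1$, $R_4$ and $R_5(\alpha_i)$ --- so the outline is sound and matches the source's method.
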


Due to the work \cite{Camacho} we conclude that there is no
$(n+1)$-dimensional solvable Leibniz algebra whose nilradical is
an algebra from the family $F_1(\alpha_4, \dots, \alpha_n,\theta)$
except the algebra $F_n^1$. Moreover, any $(n+1)$-dimensional
solvable Leibniz algebra whose nilradical is an algebra from the family
$F_3(\theta_1,\theta_2,\theta_3)$ is Lie algebra. Concerning the
second family, from \cite{Camacho} we know that there exists a
solvable Leibniz algebra of dimension $(n+1)$ only when the nilradical
is one of the following:
\begin{align*}
L_1 &= F_{2}(0,0,\dots,0,1), \quad L_{2}^{\beta_{\frac{n+3}{2}}} = F_2(0,0,\dots,0,\beta_{\frac{n+3}{2}},0,\dots,0,1), \quad  n  \ \text{is odd},\\
 L_3^{j} & =F_2^j(0,0,\dots,0,\underbrace{1}_{\beta_j=1},0,\dots,0,0), \quad 4\leq j\leq n.
\end{align*}
In particular, any $(n+1)$-dimensional solvable Leibniz algebra
whose nilradical is either $L_1, \ L_{2}^{\beta_{\frac{n+3}{2}}}$ or $L_3^{j}$
is isomorphic, respectively, to the algebra with the following table of multiplication:
\[R(L_1): \left\{\begin{array}{lll}
[e_1,e_1]=e_3, & & [e_2,e_2]=e_n,\\[1mm]
[e_i,e_1]=e_{i+1}, & 3\leq i\leq n-1,& [x,e_2]=-\frac{n-1}{2}e_2,\\[1mm]
[x,e_1]=-e_1,&&\\[1mm]
[e_1,x]=e_1, && \\[1mm]
[e_2,x]=\frac{n-1}{2}e_2,& &\\[1mm]
[e_i,x]=i e_i,& 3\leq i\leq n,&\\[1mm]
\end{array}\right.\]
\[R(L_{2}^{\beta_{\frac{n+3}{2}}}): \left\{
\begin{array}{llllll}
[e_1,e_1]=e_{3},& &[e_1,e_2]=\beta_{\frac{n+3}{2}} e_{\frac{n+3}{2}},&\\[1mm]
[e_i,e_{1}]=e_{i+1}, & 3\leq i\leq n-1,&[e_2,e_{2}]=e_n,& \\[1mm]
[x,e_1]=-e_1,& &[e_i,e_2]=\beta_{\frac{n+3}{2}}e_{\frac{n-1+2i}{2}},& 3\leq i\leq \frac{n+1}{2},\\[1mm]
[e_1,x]=e_1,& &[x,e_2]=-\frac{n-1}{2} e_2-\beta_{\frac{n+3}{2}} e_{\frac{n+1}{2}},& \\[1mm]
[e_2,x]=\frac{n-1}{2}e_2,&&&\\[1mm]
[e_i,x]=(i-1) e_i, & 3\leq i\leq n,&&\\[1mm]
\end{array}\right.\]
\[R(L_3^{j}): \left\{ \begin{array}{llll}
[e_1,e_1]=e_{3}, &      &                      [e_1,e_2]=e_{j},  &\\[1mm]
[e_i,e_{1}]=e_{i+1}, & 3\leq i\leq n-1, & [e_i,e_2]=e_{j+i-2}, & 3\leq i\leq n+2-j,\\[1mm]
[x,e_1]=-e_1, &        &                      [x,e_2]=-(j-2)e_2-e_{j-1}, &\\[1mm]
[e_1,x]=e_1,&&&\\[1mm]
[e_2,x]=(j-2)e_2,&&&\\[1mm]
[e_i,x]=(i-1) e_i, & 3\leq i\leq n.&&\\[1mm]
\end{array}\right. \]

For acquaintance with the definition of cohomology group of
Leibniz algebras and its applications to the description of the
variety of Leibniz algebras (similar to Lie algebras case) we
refer the reader to the papers
\cite{Balavoine,Gersten,GozKhak,Loday,Loday-Pirashvili,Nijen}.
Here just recall that the second cohomology group of a Leibniz algebra
$L$ with coefficients in a corepresentation $M$ is the  quotient space
\[HL^2(L,M) = ZL^2(L,M)/ BL^2(L,M),\]
 where the  $2$-cocycles $\varphi \in ZL^2(L,M)$ and the $2$-coboundaries $f\in BL^2(L,M)$ are defined
as follows

\begin{equation}\label{E.Z2}(d^2\varphi)(x,y,z)=[x,\varphi(y,z)]
 - [\varphi(x,y), z] + [\varphi(x,z), y] + \varphi(x, [y,z]) - \varphi([x,y],z) + \varphi([x,z],y)=0 \end{equation} and
\begin{equation}\label{E.B2}
 f(x,y) = [d(x),y] + [x,d(y)] - d([x,y]) \  \ \text{for some linear map} \ d. \end{equation}

The following proposition summarizes results regarding derivations
and the second group cohomology for the algebras $RFN_n$ and $R_1$
from the works \cite{Anc} and \cite{KhudOmir2}.
\begin{prop}
\[\begin{array}{ll} \label{pr0}
\dim \Der(RFN_n) = 2, & \qquad  \dim \Der(R_1) = 2,\\[1mm]
\dim BL^2(RFN_n, RFN_n) =(n+1)^2-2, & \qquad  \dim BL^2(R_1, R_1)=(n+1)^2-2,\\[1mm]
\dim ZL^2(RFN_n, RFN_n)=(n+1)^2-2,  & \qquad  \dim ZL^2(R_1, R_1) =(n+1)^2-2,\\[1mm]
\dim HL^2(RFN_n, RFN_n) =0,& \qquad  \dim HL^2(R_1, R_1) =0.\end{array}\]
\end{prop}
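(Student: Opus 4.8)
The plan is to deduce the four equalities in each column from just two genuine computations --- the derivation algebra and the space of $2$-cocycles --- together with the homological bookkeeping that relates them. Throughout, let $L$ denote either of the two $(n+1)$-dimensional algebras involved (the solvable algebra of Theorem~\ref{thm26} and the algebra $R_1$ of Theorem~\ref{thm33}), so that $\dim\Hom(L,L)=(n+1)^2$.

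First I would compute $\Der(L)$. Representing an arbitrary linear operator $d$ by its matrix in the basis $\{e_1,\dots,e_n,x\}$, I would impose the derivation identity $d([u,v])=[d(u),v]+[u,d(v)]$ on all pairs of basis vectors. Both multiplication tables are extremely rigid: the single relation $[e_i,e_1]=e_{i+1}$ propagates $d(e_{i+1})$ from $d(e_i)$ and $d(e_1)$, while the diagonal relations $[e_i,x]=\lambda_i e_i$ constrain the weights. Feeding these in, the linear system collapses to exactly two free parameters, so $\dim\Der(L)=2$; the bulk of the work here is the bookkeeping of the recursions generated by $e_1$ and by $x$.

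Next, the coboundary map $d^1\colon\Hom(L,L)\to ZL^2(L,L)$ that sends $d$ to the cochain in \eqref{E.B2} has kernel exactly $\Der(L)$, so rank--nullity immediately gives $\dim BL^2(L,L)=\dim\Hom(L,L)-\dim\Der(L)=(n+1)^2-2$. The real obstacle is the direct determination of $ZL^2(L,L)$. I would write an arbitrary $2$-cochain $\varphi$ with unknown coefficients in the fixed basis and substitute it into the cocycle equation \eqref{E.Z2} for every triple of basis vectors. To keep the resulting large system tractable I would exploit the grading by the eigenvalues $\lambda_i$ of right multiplication by $x$ (so that $[e_i,x]=\lambda_i e_i$, with $x$ in degree $0$): the coboundary $d^2$ is homogeneous for this grading, hence \eqref{E.Z2} splits into weight-homogeneous blocks, and within each block the relations coming from $e_1$ convert the cocycle conditions into a transparent recursion that determines most of the unknown coefficients from a handful of them. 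Counting the surviving free parameters then yields $\dim ZL^2(L,L)=(n+1)^2-2$.

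Finally, $BL^2\subseteq ZL^2$ always holds (this is $d^2\circ d^1=0$), and the two spaces have now been shown to share the dimension $(n+1)^2-2$; therefore they coincide and $HL^2(L,L)=ZL^2/BL^2=0$. The same scheme applies verbatim to both algebras, with only the multiplication tables fed into the two systems differing. I expect the cocycle computation above to be the principal difficulty: arranging the weight blocks and the $e_1$-recursion so that the number of independent solutions is provably \emph{exactly} $(n+1)^2-2$, and not merely bounded by it, is where the care is required.
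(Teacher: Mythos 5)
Your plan is sound and is essentially the route the paper itself relies on: the proposition is stated without proof, the values being imported from \cite{Anc} and \cite{KhudOmir2}, but the paper's own treatment of the analogous algebras $R_2(\alpha)$--$R_5(\alpha_i)$ proceeds exactly as you describe --- direct computation of $\Der$, the identity $\dim BL^2=(n+1)^2-\dim\Der$ coming from the fact that $\Der(L)$ is precisely the kernel of the map $d\mapsto f$ in \eqref{E.B2}, a basis-by-basis solution of \eqref{E.Z2} for $ZL^2$, and the inclusion $BL^2\subseteq ZL^2$ together with equality of dimensions to conclude $HL^2=0$. The only substantive work left in your outline is the cocycle count itself, and your weight-space device is legitimate here because right multiplication by $x$ is a diagonalizable derivation of both $RNF_n$ and $R_1$, so the cocycle system does split into weight blocks as you claim.
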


\subsection{Degeneration of Leibniz algebras}

The bilinear maps $V \times V \rightarrow V$ form a vector space
$\Hom(V\otimes V, V )$ of dimension $\dim(V)^3$, which can be viewed
with its natural structure of an affine algebraic variety over
the field $\mathbb{F}$. An $n$-dimensional Leibniz algebra $L$ of the
variety $\Leib_n$ may be considered as an element $\lambda(L)$ via the bilinear
mapping $\lambda \colon  L\otimes L  \rightarrow L$ satisfying Leibniz
identity.

The group $\GL_n(F)$ naturally acts on $\Leib_n$ via change of
basis, i.e.,
\[(g*\lambda)(x,y)=g \Big(\lambda \big(g^{-1}(x),g^{-1}(y) \big) \Big), \quad  g \in \GL_n(F), \  \lambda \in \Leib_n.\]

The orbits $\Orb(-)$ under this action are the  isomorphism
classes of algebras.


Note that solvable Leibniz algebras of the same dimension also
form an invariant subvariety of the variety of Leibniz algebras
under the mentioned action.

\begin{defn} It is said that an algebra $\lambda$ degenerates to an algebra $\mu$, if $\Orb(\mu)$ lies in the
Zariski closure of $\Orb(\lambda)$ (denoted by
$\overline{\Orb(\lambda)}$). We denote this by $\lambda
\rightarrow \mu$.
\end{defn}

It is remarkable that $\Orb(NF_n)$ and $\Orb(RNF_n)$ are open sets
in the subvariety of $n$-dimensional nilpotent Leibniz algebras
\cite{Ayup} and the variety of $(n+1)$-dimensional Leibniz
algebras \cite{Anc}, respectively (they are so-called \emph{rigid}
algebras).

In the case when the ground field is the  complex numbers
$\mathbb{C}$, we give an equivalent definition of
degeneration.

\begin{defn} Let $g \colon (0, 1] \rightarrow \GL_n(V)$ be a continuous mapping. We construct a
parameterized family of Leibniz algebras $g_{t} = (V,
[-,-]_{t}), t\in (0, 1]$ isomorphic to $L$. For each $t$ the new
Leibniz bracket $[-,-]_{t}$ on $V$ is defined via the old one as
follows: $[x,y]_{t} = g_{t}[g_{t}^{-1}(x),g_{t}^{-1}(y)]$,
$\forall x, y \in V$. If for any $x,y \in V$ there exists the
limit
\[\lim_{t \to +0}[x,y]_{t} =\lim_{t\to +0}g_{t}[g_{t}^{-1}(x),g_{t}^{-1}(y)] =: [x,y]_0,\]
 then
$[-,-]_0$ is a well-defined Leibniz bracket. The Leibniz algebra
$L_0=(V, [-, -]_0)$ is called a degeneration of the algebra $L$.
\end{defn}



For given Leibniz algebras $\lambda, \mu \in \Leib_{n+1}$ sometimes it is quite difficult to establish the existence of degeneration $\lambda \rightarrow \mu$.
 It is helpful to obtain some necessary invariant conditions for the existence of a degeneration.
 The complete list of the invariants conditions can be found in the works \cite{Burde2}, \cite{Grunewald}, \cite{KOLevelone}, \cite{Seeley}.
  Here we give some of them which we shall use.

We denote by $\Der(\lambda), \ \lambda^m, \Lie(\lambda)$ the space of derivations, powers and maximal Lie subalgebra of the algebra $\lambda$, respectively.

Let $\lambda \rightarrow \mu$ be a nontrivial degeneration. Then the
following inequalities hold:
\begin{equation} \label{eqinvariant}
\dim \Der(\lambda) < \dim \Der(\mu), \quad \dim \lambda^m \geq \dim \mu^m \ \text{for  any} \ m\in \mathbb{N}, \quad \dim \Lie(\lambda) \leq \dim \Lie(\mu).
\end{equation}

Further we shall use $(i, j)$-invariant. This invariant was given
for Lie algebras in \cite{Burde2} and it is also applicable
for  Leibniz algebras.

Let $\lambda \in \Leib_{n+1}$ with structure constants
$(\gamma_{i,j}^k)$, and $(i, j)$ be a pair of positive integers
such that
\[c_{i,j} = \frac {\tr (\ad x)^i \cdot \tr (\ad y)^j} {\tr \big((\ad x)^i \circ (\ad y)^j\big)}\]
 is independent of the
elements $x, y$ of the Leibniz algebra $\lambda$. Then $c_{i,j} (\lambda) = c_{j,i}(\lambda)$ is a
quotient of two polynomials in $C[\gamma_{i,j}^k]$. If neither of
these polynomials is zero, we call $c_{i,j} \in
\mathbb{C}[\gamma_{i,j}^k] $ an $(i, j)$-\textbf{invariant} of
$\lambda$. Suppose $\lambda \in \Leib_{n+1}$ has an $(i,
j)$-invariant $c_{i,j}$.  Then all $\mu \in
\overline{\Orb(\lambda)} $ have the same $(i, j)$-invariant.

Denote by $\mathcal{LR}_n(N)$ the set of all $n$-dimensional
solvable Leibniz algebras whose nilradical is $N$. In the paper
\cite{Casas} it is proved that a given degeneration between two
solvable Leibniz algebras implies some restriction on their
nilradicals. In particular, in the case of equality of dimensions
of nilradicals the existence of degeneration between solvable
Leibniz algebras implies the existence of degeneration between their
nilradicals.

\begin{prop} \label{P:dim} Let $R_1, R_2$ be $n$-dimensional solvable Leibniz algebras and let $R_1 \in \mathcal{L}R_n(N_1), \
R_2 \in \mathcal{L}R_n(N_2)$. If $R_1 \rightarrow R_2$, then $\dim
N_2 \geq \dim N_1$.  Moreover, if $\dim N_1 = \dim N_2$ and $R_1
\rightarrow R_2$, then $N_1 \rightarrow N_2$.
\end{prop}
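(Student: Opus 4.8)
The plan is to work with the topological (limit) description of degeneration available over $\mathbb{C}$ and to track the nilradical through the limiting process by passing to a Grassmannian. First I would fix the degeneration $R_1 \to R_2$ and realize it by a family $g_t \in \GL(V)$, $t \in (0,1]$, so that the brackets $[x,y]_t = g_t[g_t^{-1}(x), g_t^{-1}(y)]$ converge pointwise to the bracket $[-,-]_0$ of $R_2$ as $t \to +0$. Since $g_t$ is an isomorphism from $R_1$ onto $(V, [-,-]_t)$, and an isomorphism carries the maximal nilpotent ideal to the maximal nilpotent ideal, the nilradical of $(V, [-,-]_t)$ is exactly $g_t(N_1)$, a subspace of constant dimension $m := \dim N_1$.

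Next I would pass to the Grassmannian $\mathrm{Gr}(m, V)$, which is compact, and select a sequence $t_k \to +0$ along which $g_{t_k}(N_1)$ converges to some $m$-dimensional subspace $W \subseteq V$. The key step is to verify that $W$ is a nilpotent ideal of $R_2$. The ideal property passes to the limit directly: if $w_k \in g_{t_k}(N_1)$ with $w_k \to w \in W$ and $v_k \to v$, then $[v_k, w_k]_{t_k}, [w_k, v_k]_{t_k} \in g_{t_k}(N_1)$, and taking limits (using convergence of both the brackets and the subspaces) places $[v,w]_0$ and $[w,v]_0$ in $W$. For nilpotency I would use that every $m$-dimensional nilpotent Leibniz algebra has nilpotency index at most $m+1$, so each $g_{t_k}(N_1)$ satisfies the single closed condition that its $(m+1)$-st power in the lower central series vanishes; this condition is preserved in the limit, so $W$ is nilpotent. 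Hence $W$ lies in the nilradical $N_2$, which gives $\dim N_1 = \dim W \leq \dim N_2$ and proves the first assertion.

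For the second assertion, assume $\dim N_1 = \dim N_2$. Then $W \subseteq N_2$ together with $\dim W = \dim N_2$ forces $W = N_2$, and it remains to promote the subspace convergence $g_{t_k}(N_1) \to N_2$ into an honest degeneration $N_1 \to N_2$. I would fix an $m$-dimensional model space $U$ and choose bases of $g_{t_k}(N_1)$ converging to a basis of $W = N_2$; the corresponding linear identifications $\psi_k \colon U \to g_{t_k}(N_1)$ then converge to an isomorphism $\psi_\infty \colon U \to N_2$. Transporting the brackets, the structure $\{a,b\}_k := \psi_k^{-1}\big([\psi_k a, \psi_k b]_{t_k}\big)$ on $U$ is isomorphic to $N_1$ for each $k$, and since $\psi_\infty$ is invertible onto $W$, these structure constants converge to those of an algebra isomorphic to $N_2$. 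Thus $N_2$ lies in the Euclidean closure of $\Orb(N_1)$; as the Euclidean and Zariski closures of a $\GL_m$-orbit coincide, $N_2 \in \overline{\Orb(N_1)}$, i.e. $N_1 \to N_2$.

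The main obstacle is the key step of the second paragraph, showing that the Grassmannian limit $W$ is again a \emph{nilpotent} ideal: the ideal property is immediate, but nilpotency is delicate, and the argument hinges on the uniform bound $m+1$ on the nilpotency index, which converts ``nilpotent'' into a single closed algebraic condition surviving the limit. A secondary technical point is the passage from the convergent subsequence to a genuine orbit-closure statement in the second assertion, which I resolve by invoking the coincidence of Euclidean and Zariski closures of orbits rather than by constructing a continuous one-parameter family.
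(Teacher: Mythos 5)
The paper does not actually prove this proposition: it is quoted from the reference \cite{Casas} (``On the degenerations of solvable Leibniz algebras''), so there is no in-text proof to compare against. Judged on its own, your argument is correct and is the natural one. The two nontrivial inputs are both handled properly: (i) passing from the Zariski orbit-closure definition of $R_1\rightarrow R_2$ to a sequence $g_{t_k}$ with $g_{t_k}\cdot\lambda_{R_1}\to\lambda_{R_2}$ in the Euclidean topology, which rests on the constructibility of $\GL$-orbits over $\mathbb{C}$ (so Zariski and Euclidean closures agree) -- the paper licenses this by presenting the one-parameter limit as an equivalent definition of degeneration; and (ii) the observation that nilpotency of an $m$-dimensional subspace is the single closed condition $W^{m+1}=0$, using the uniform bound on the nilpotency index stated in Section~\ref{S:prel}, so that it survives the Grassmannian limit along with the ideal property. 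Together these give $W\subseteq N_2$, hence $\dim N_1\leq\dim N_2$, and in the equidimensional case $W=N_2$; your transport of structure constants through converging bases then places $N_2$ in the Euclidean (hence Zariski) closure of $\Orb(N_1)$, which only uses the trivial inclusion of closures. Two small points you could make explicit: the convergence $[v_k,w_k]_{t_k}\to[v,w]_0$ uses that pointwise convergence of bilinear maps on a finite-dimensional space is convergence of structure constants, hence uniform on bounded sets; and the left inverse $\psi_k^{-1}$ on the moving image $g_{t_k}(N_1)$ should be realized by a continuously varying left inverse (e.g.\ $(\psi_k^*\psi_k)^{-1}\psi_k^*$), which converges because $\psi_\infty$ has full rank. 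Neither affects the validity of the proof. Note also that your argument never uses solvability of $R_1$ and $R_2$, so it proves a slightly more general statement than the one quoted.
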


\section{Some irreducible components of the variety $\Leib_{n+1}$}\label{S:irred}

In this section we present some irreducible components of the
variety $\Leib_{n+1}$ in terms of closures of orbits of some
Leibniz algebras. The following equality $\dim \Orb(\lambda) =
(n+1)^2 - \dim \Der(\lambda)$ gives us the dimensional relation
between orbits and derivations of an algebra $\lambda$.

Let $L$ be an $(n+1)$-dimensional solvable Leibniz algebra whose
nilradical is the filiform algebra $F_n^1$. The proposition below
describes derivations of the algebras from $R_2(\alpha) -
R_5(\alpha_i)$ in the list of Theorem~\ref{thm33}.

\begin{prop} \label{pr1} Derivations of the algebras $R_2(\alpha) - R_5(\alpha_i)$ have the following form:

\[\begin{array}{cl}
\Der(R_2(\alpha)): & \left\{\begin{array}{lll}
                d_1(e_1)=e_1, & d_1(e_i)=(i-2)e_i,& 3\leq i\leq n,\\
                d_2(e_i)=e_i, & & 2\leq i\leq n,\\
                d_3(x)=-e_1, &  d_3(e_i)=e_{i+1},& 2\leq i\leq n-1;
\end{array}\right. \quad \alpha \neq 2-n; 1-n,\\[6mm]
\Der(R_2(2-n)): & \left\{\begin{array}{lll}
                d_1(e_1)=e_1, & d_1(e_i)=(i-2)e_i,& 3\leq i\leq n,\\
                d_2(e_i)=e_i, & & 2\leq i\leq n,\\
                d_3(x)=-e_1, &  d_3(e_i)=e_{i+1},& 2\leq i\leq
                n-1,\\
                d_4(x)= - e_{n-1}, &  d_4(e_1)=e_{n};
\end{array}\right. \\[6mm]
\Der(R_2(1-n)): & \left\{\begin{array}{lll}
                d_1(e_1)=e_1, & d_1(e_i)=(i-2)e_i,& 3\leq i\leq n,\\
                d_2(e_i)=e_i, & & 2\leq i\leq n,\\
                d_3(x)=-e_1, &  d_3(e_i)=e_{i+1},& 2\leq i\leq
                n-1,\\
                d_4(x)= e_{n}; \end{array}\right. \\[6mm]
                \end{array}\]
\[\begin{array}{cl}
\Der(R_3):   & \left\{\begin{array}{lll}
                d_1(e_1)=e_1, & d_1(e_i)=(i-n)e_i, & 2\leq i\leq n-1,\\
                d_2(x)=e_1, & d_2(e_i)=-e_{i+1},& 2\leq i\leq n-1,\, \\
                d_3(x)=e_n;
\end{array}\right.\\[6mm]
\Der(R_4):    & \left\{\begin{array}{lll}
                d_1(e_1)=e_1, & d_1(e_i)=(i+1-n)e_i,& 2\leq i\leq n,\\
             d_2(x)=-e_1,  &  d_2(e_i)=e_{i+1},& 2\leq i\leq n-1,\\
              d_3(e_1)=e_n, & d_3(x)=-e_{n-1};
\end{array}\right.\\[6mm]
\end{array}\]
\[\begin{array}{cl}
\Der(R_5(0)): &  \left\{\begin{array}{lll}
                 d_1(e_1)=e_1, & d_1(e_i)=(i-1)e_i, & 2\leq i\leq n,\\
                 d_2(e_1)=e_2, & d_2(e_i)=e_i, & 2\leq i\leq n,\\
                 d_j(e_1)=e_j, & d_j(e_i)=e_{i+j-2}, & 3\leq j\leq n, \ 2\leq i\leq n-j+2;
\end{array}\right.\\[6mm]
\Der(R_5(\alpha_i)): &  \left\{\begin{array}{llll}
                 d_1(e_1)=e_2, & d_2(e_i)=e_2 + \alpha_ne_n, &  d_1(e_i)=e_i, & 3\leq i\leq n,\\
                 d_j(e_1)=e_{j+1}, & d_j(e_i)=e_{i+j-1}, & 2\leq j\leq n-1, & 2\leq i\leq n-j+1,
\end{array}\right.
\end{array}\]
where in the case of $R_5(\alpha_i)$ one of the parameters
$\alpha_i$ is nonzero.
\end{prop}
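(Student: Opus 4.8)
The proof is a direct computation of the derivation algebra for each of the five multiplication tables, so the plan is to shrink the underlying linear system as much as possible before solving it. Recall that a linear map $d$ on $L$ is a derivation precisely when
\[ d([u,v]) = [d(u),v] + [u,d(v)] \]
holds for every pair $u,v$ drawn from a generating set of the algebra. I would therefore write $d$ as an unknown $(n+1)\times(n+1)$ matrix in the basis $\{e_1,\dots,e_n,x\}$ and impose this identity only on the defining products listed in Theorem~\ref{thm33}, then solve for the matrix entries.

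Before solving, I would cut down the unknowns using that the nilradical $N=\langle e_1,\dots,e_n\rangle$ (isomorphic to $F_n^1$) is a characteristic ideal, so $d(N)\subseteq N$ for every derivation $d$. This removes the $x$-component of each $d(e_i)$, and I may set $d(e_i)=\sum_{k=1}^n a_{ki}e_k$ and $d(x)=\sum_{k=1}^n c_k e_k+\beta x$. Next I would exploit that the nilpotent chain is one-generated under right multiplication by $e_1$: the relations $[e_i,e_1]=e_{i+1}$ turn the derivation identity into a recursion expressing $d(e_{i+1})$ through $d(e_i)$ and $d(e_1)$, so that for $R_2(\alpha)$, $R_3$, $R_4$ the restriction $d|_N$ is completely determined by $d(e_1)$ and $d(e_2)$; for $R_5$ the relation $[e_1,e_1]=e_3$ plays the analogous generating role, together with the $x$-action, which produces $e_2$ through $[e_1,x]$.

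Feeding these recursions into the mixed relations $[x,e_1]$, $[e_1,x]$ and the weight relations $[e_i,x]=\lambda_i e_i$ is where the structure collapses. Comparing coefficients in the weight relations forces the matrix of $d|_N$ to respect the $\ad x$-eigenspace decomposition: an off-diagonal entry $a_{ki}$ with $k\neq i$ can be nonzero only when the weights $\lambda_i$ and $\lambda_k$ match in a way compatible with the grading, and otherwise must vanish; the same relations pin down $\beta=0$ and the $x$-components of $d(x)$. For $R_2(\alpha)$, $R_3$, $R_4$ this leaves generically a three-dimensional space spanned by a grading derivation, a second diagonal derivation, and a shift derivation of the form $e_i\mapsto e_{i+1}$, $x\mapsto \pm e_1$. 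For $R_5$, where the weights of $e_2,\dots,e_n$ all coincide, the same eigenspace analysis instead permits the whole family of shift maps $e_i\mapsto e_{i+j-2}$, which accounts for its substantially larger derivation algebra.

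The only delicate point, and the expected main obstacle, is the bookkeeping of resonant parameter values. For $R_2(\alpha)$ the weights are $1$ on $e_1$ and $i-1+\alpha$ on $e_i$, and an extra off-diagonal derivation becomes admissible exactly when two of these coincide compatibly with the bracket, namely at $\alpha=2-n$ (where $e_n$ acquires weight $1$, matching $e_1$) and $\alpha=1-n$ (where $e_n$ acquires weight $0$), yielding the additional generator $d_4$ in each case. I would therefore enumerate the finitely many weight coincidences $\lambda_i=\lambda_k$ and $\lambda_i=1$, solve the degenerate linear systems that arise at those values, and verify directly from the derivation identity that the candidate $d_4$ recorded in the statement is indeed a derivation and is linearly independent of $d_1,d_2,d_3$. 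Everything else reduces to routine comparison of coefficients.
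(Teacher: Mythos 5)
Your proposal is correct and matches the paper's approach: the paper's own proof is literally the single line ``direct computations,'' and what you describe — reducing to the generators, using the recursion $d(e_{i+1})=[d(e_i),e_1]+[e_i,d(e_1)]$, and locating the extra derivations at the weight resonances $\alpha=2-n$ and $\alpha=1-n$ — is a sound and well-organized way to carry out exactly that computation. Your identification of the resonant values is consistent with the eigenvalues $1$ on $e_1$ and $i-1+\alpha$ on $e_i$, so no gap is apparent.
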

\begin{proof} The proof of the proposition is carrying out by direct computations.
\end{proof}

Due to equality \eqref{E.B2} defining the space $BL^2$ we have
\begin{cor}
\begin{align*}
\dim BL^2\big(R_2(\alpha), R_2(\alpha)\big) &=
\left\{\begin{array}{ll}(n+1)^2 - 4, & \alpha = 2-n \ \text{or} \ 1-n, \\
(n+1)^2 - 3, & \alpha \neq 2-n, 1-n; \end{array}\right.\\
\dim BL^2(R_3, R_3) &= (n+1)^2 - 3; \\
\dim BL^2(R_4, R_4)&= (n+1)^2 - 3; \\
\dim BL^2\big(R_5(\alpha_i), R_5(\alpha_i)\big) &= \left\{\begin{array}{ll}n^2+n+1, & \alpha_i = 0 \ \ \text{for all} \ i,   \\
n^2+n+2, & \alpha_i \neq 0 \ \  \text{for  some}   \ i.
\end{array}\right.
\end{align*}
\end{cor}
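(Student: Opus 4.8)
The plan is to exhibit $BL^2(L,L)$ as the image of a single linear operator whose kernel is $\Der(L)$, and then deduce every displayed dimension from the derivation counts already established in Proposition~\ref{pr1} via the rank--nullity theorem.

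First I would fix an $(n+1)$-dimensional Leibniz algebra $L$ from the list and consider the coboundary operator
\[d^1 \colon \Hom(L,L) \longrightarrow \Hom(L\otimes L, L), \qquad d^1(d)(x,y) = [d(x),y] + [x,d(y)] - d([x,y]),\]
so that, by the very definition \eqref{E.B2}, $BL^2(L,L) = \Ima d^1$. The crucial observation is that $d \in \Ker d^1$ precisely when $[d(x),y] + [x,d(y)] = d([x,y])$ for all $x,y\in L$, which is exactly the defining condition of a derivation; hence $\Ker d^1 = \Der(L)$.

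Applying rank--nullity to $d^1$ then gives
\[\dim BL^2(L,L) = \dim \Hom(L,L) - \dim \Der(L) = (n+1)^2 - \dim \Der(L),\]
since $\dim L = n+1$. It remains only to substitute $\dim \Der(L)$ in each case, read off from Proposition~\ref{pr1} by counting the basis derivations: three (namely $d_1,d_2,d_3$) for $R_2(\alpha)$ with $\alpha \neq 2-n,1-n$ and for $R_3$ and $R_4$; four for $R_2(2-n)$ and $R_2(1-n)$, where the extra derivation $d_4$ occurs; exactly $n$ for $R_5(0)$, whose derivations are $d_1,d_2,d_3,\dots,d_n$, giving $(n+1)^2-n=n^2+n+1$; and exactly $n-1$ for $R_5(\alpha_i)$ with some $\alpha_i\neq 0$, whose derivations are $d_1,d_2,\dots,d_{n-1}$, giving $(n+1)^2-(n-1)=n^2+n+2$.

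There is no substantive obstacle: the entire content is already packaged in Proposition~\ref{pr1}, and the corollary is a formal consequence of it. The only points demanding care are bookkeeping ones --- ensuring that the maps displayed in Proposition~\ref{pr1} genuinely form a basis of $\Der(L)$ (so that merely counting them yields $\dim \Der(L)$), and tallying the parametrized families $d_j$ in the two $R_5$ subcases correctly, where the index ranges $3\leq j\leq n$ and $2\leq j\leq n-1$ must be counted without an off-by-one error.
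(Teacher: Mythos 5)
Your proposal is correct and is precisely the paper's argument: the corollary is stated as an immediate consequence of \eqref{E.B2}, i.e.\ $BL^2(L,L)=\Ima d^1$ with $\Ker d^1=\Der(L)$, so $\dim BL^2(L,L)=(n+1)^2-\dim\Der(L)$, and the dimensions are read off from Proposition~\ref{pr1}. Your case-by-case tally of the derivations ($3$, $4$, $3$, $3$, $n$, and $n-1$ respectively) matches the paper's lists exactly.
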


The following result presents values of $(1,1)$-invariants for the algebras $RNF_n$, $ R_1-R_5(\alpha_i)$, $ R(L_{1}), \ R(L_{2}^{\beta_{\frac{n+3}{2}}}),  \ R(L_{3}^j)$.

\begin{prop} \label{prop2}
\begin{align*}
c_{1,1}(RNF_n)&= \frac{(1+2+3+\dots+n)^2}{1+2^2+3^2+\dots+n^2}= \frac{3n(n+1)} {2(2n+1)},\\
c_{1,1}(R_1)&= \frac{(1+1+2+3+\dots+n-1)^2}{1+1+2^2+3^2+\dots+(n-1)^2}=\frac{3(n^2-n+2)^2}{2(6+n(n-1)(2n-1))},\\
c_{1,1}(R(L_{1}))&= \frac{(1+2+3+\dots+n-1+\frac{n-1}{2})^2}{1+2^2+3^2+\dots+(n-1)^2+(\frac{n-1}{2})^2} =\frac{3(n^2-1)}{4n-3},\\
c_{1,1}(R(L_{2}^{\beta_{\frac{n+3}{2}}}))&=
\frac{(1+2+3+\dots+n-1+\frac{n-1}{2})^2}{1+2^2+3^2+\dots+(n-1)^2+(\frac{n-1}{2})^2}=\frac{3(n^2-1)}{4n-3},\\
c_{1,1}(R(L_{3}^j))&=\frac{(1+2+3+\dots+n-1+j-2)^2}{1+2^2+3^2+\dots+(n-1)^2+(j-2)^2} =
\frac{3(n^2 - n +2j-4)^2}{4(n^3-2n^2+n+3(j-2)^2)},\\
c_{1,1}(R_2(\alpha)) &= \frac{(1+(1+\alpha)+(2+\alpha)+\dots+(n-1+\alpha))^2}
{1^2+(1+\alpha)^2+(2+\alpha)^2+\dots+(n-1+\alpha)^2},\\
c_{1,1}(R_3) &=\frac{(1+(2-n)+(3-n)+\dots+(-1))^2}{1^2+(2-n)^2+(3-n)^2+\dots+1^2}=
\frac{3n(n-3)^2}{2(2n^2-9n+15)},\\
c_{1,1}(R_4) &= \frac{(1+(3-n)+(4-n)+\dots+(-1)+0+1)^2}{1^2+(3-n)^2+(4-n)^2+\dots+1^2+0^2+1^2}=
\frac{3(n^2-5n+10)^2}{2(2n^3-15n^2+37n-18)},\\
c_{1,1}(R_5(\alpha_{i})) &= n-1.
\end{align*}
\end{prop}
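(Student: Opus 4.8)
The plan is to reduce every $c_{1,1}$ to a single trace computation governed by the eigenvalues of one operator. Throughout I use the right multiplication (adjoint) operator $\ad_z \colon w \mapsto [w,z]$, whose traces enter the definition of the $(1,1)$-invariant. For each algebra I order the basis so that the outer (non-nilradical) element $x$ comes first, followed by the nilradical basis $e_1,\dots,e_n$ in order of the natural grading. Because the nilradical is a nilpotent ideal, right multiplication by any $e_i$ strictly raises the grading, so in this ordering each $\ad_{e_i}$ is \emph{strictly} upper triangular, hence nilpotent. The operator $\ad_x$ is upper triangular, with diagonal entries exactly the scalars $\lambda_i$ appearing in $[e_i,x]=\lambda_i e_i+(\text{higher-degree terms})$ and a zero in the slot of $x$ (since $[x,x]$, when nonzero, lies strictly higher in the grading).

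Next I evaluate both traces for arbitrary elements $u=a_0x+\sum_i a_ie_i$ and $v=b_0x+\sum_i b_ie_i$. Since each $\ad_{e_i}$ is nilpotent, $\tr\ad_u=a_0\tr\ad_x=a_0S_1$, where $S_1:=\sum_i\lambda_i$. Expanding $\ad_u\circ\ad_v$, every term that contains at least one factor $\ad_{e_i}$ is a product of upper triangular matrices at least one of which is strictly upper triangular; such a product is itself strictly upper triangular and therefore traceless. Only the term $a_0b_0\,\ad_x^2$ contributes to the diagonal, giving $\tr(\ad_u\circ\ad_v)=a_0b_0\tr(\ad_x^2)=a_0b_0S_2$ with $S_2:=\sum_i\lambda_i^2$. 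Hence
\[
c_{1,1}=\frac{\tr\ad_u\cdot\tr\ad_v}{\tr(\ad_u\circ\ad_v)}=\frac{a_0b_0\,S_1^2}{a_0b_0\,S_2}=\frac{S_1^2}{S_2},
\]
which is independent of $u$ and $v$; this step simultaneously confirms that $c_{1,1}$ is a genuine $(1,1)$-invariant for each algebra on the list.

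It then remains to read off the multiset $\{\lambda_i\}$ from each multiplication table and to compute $S_1^2/S_2$ using $\sum_{k=1}^{m}k=m(m+1)/2$ and $\sum_{k=1}^{m}k^2=m(m+1)(2m+1)/6$. For $RNF_n$ one gets $\{\lambda_i\}=\{1,2,\dots,n\}$, yielding the stated value at once; the algebras $R_2(\alpha)$, $R_3$, $R_4$, $R(L_1)$, $R(L_2^{\beta_{(n+3)/2}})$ and $R(L_3^j)$ give shifted, truncated, or fractionally modified ranges of integers, and $R_5(\alpha_i)$ gives the multiset consisting of a single $0$ and $(n-1)$ copies of $1$, so that $S_1=S_2=n-1$ and $c_{1,1}=n-1$.

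I expect the only conceptual obstacle to be the well-definedness step rather than the closing arithmetic: one must be certain that the nilpotent contributions drop out of \emph{both} traces, i.e.\ that no cross term $\tr(\ad_{e_i}\ad_{e_j})$ or $\tr(\ad_x\ad_{e_j})$ survives. This is precisely where the simultaneous upper-triangularizability of all right multiplications in the graded basis is used, and it is what forces $c_{1,1}$ to depend only on the diagonal action $\{\lambda_i\}$. Some care is also needed in the cases with non-integer or parameter-dependent eigenvalues---the value $\tfrac{n-1}{2}$ attached to $e_2$ in $R(L_1)$ and $R(L_2^{\beta_{(n+3)/2}})$, the global shift by $\alpha$ in $R_2(\alpha)$, and the dependence on $j$ in $R(L_3^j)$---where the exceptional term must be carried explicitly through the sums of squares.
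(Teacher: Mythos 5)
Your proposal is correct and is essentially the computation the paper summarizes as ``direct calculations'': the displayed fractions in the statement are exactly $S_1^2/S_2$ read off from the diagonal of the right-multiplication operator $\ad x$, and your triangularization lemma (all right multiplications are simultaneously triangular in the ordered basis $x,e_1,\dots,e_n$, with the $\ad_{e_i}$ strictly so) is precisely what justifies both the well-definedness of $c_{1,1}$ and the reduction to the eigenvalues $\lambda_i$. Only trivial slips: for $R_5(\alpha_i)$ the multiset of diagonal entries has two zeros (from $x$ and $e_1$) rather than one, which does not affect $S_1=S_2=n-1$, and $R_1$ should be listed among the cases handled by the shifted-range computation.
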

\begin{proof} The proof of the proposition is carrying out by direct calculations.
\end{proof}

\begin{rem} \label{rem1}
 From Proposition~\ref{prop2} it is easy to see that
\begin{align*}
c_{1,1}(R_2(1))  & =c_{1,1}(RNF_n), \qquad c_{1,1}(R_2(0)) =c_{1,1}(R_1),\\
c_{1,1}(R_2(1-n))  &  =c_{1,1}(R_3), \qquad  c_{1,1}(R_2(2-n))  =c_{1,1}(R_4).
\end{align*}
\end{rem}

Below, we give some degenerations.

\begin{exam} \label{exam} There exist the  following degenerations
\[\begin{array} {llllll}
F_n^1\rightarrow F_n^2 & \text{via} & g_t(e_1) = e_1 - t^{-1}e_2, & g_t(e_2) =
t^{-1}e_2, &  g_t(e_i) = e_i, & 3 \leq i \leq n;\\[1mm]
RNF_n\rightarrow R_2(1) & \text{via} & g_t(x) = x, & g_t(e_1) = t^{-1}e_1,& g_t(e_i) = t^{-i+2}e_i, & 2 \leq i \leq n;\\[1mm]
R_1 \rightarrow R_2(0) & \text{via} & g_t(x) = x, & g_t(e_1) = e_1,  & g_t(e_i) = t e_i, & 2 \leq i \leq n;\\[1mm]
R_3 \rightarrow R_2(1-n) & \text{via} & g_t(x) = x, & g_t(e_1) = e_1, & g_t(e_i) = t e_i, & 2 \leq i \leq n;\\[1mm]
R_4 \rightarrow R_2(2-n) & \text{via} & g_t(x) = x, & g_t(e_1) = e_1, & g_t(e_i) = t e_i, & 2 \leq i \leq n;\\[1mm]
R_5(\alpha_i) \rightarrow R_5(0)& \text{via} & g_t(x) = x, &
g_t(e_1) = te_1, & g_t(e_i) = t^{i-1} e_i, & 2 \leq i \leq
n.\end{array}\]
\end{exam}

Now we present representatives of irreducible components of the variety $\Leib_{n+1}$.

\begin{thm} \label{thm111} $\overline{\Orb(R_3)}, \ \overline{\Orb(R_4)}$ and $\overline{\cup_{\alpha\in K}\Orb(R_2(\alpha))}$,
where $K=\mathbb{C}\setminus \{0, 1, 1-n, 2-n\}$, are irreducible components of $\Leib_{n+1}$.
\end{thm}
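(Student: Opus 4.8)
The plan is to show that each of the three orbit closures is an irreducible component, i.e. a maximal irreducible subvariety of $\Leib_{n+1}$. Since each orbit $\Orb(\lambda)$ is the continuous image of the irreducible group $\GL_{n+1}(\mathbb{C})$, its closure $\overline{\Orb(\lambda)}$ is irreducible, and the union $\overline{\cup_{\alpha\in K}\Orb(R_2(\alpha))}$ is irreducible as the closure of the image of the irreducible variety $\GL_{n+1}(\mathbb{C})\times K$ under the action map. So irreducibility is automatic; the real content is \emph{maximality}. To prove maximality of $\overline{\Orb(\lambda)}$ it suffices to show that $\lambda$ does not lie in the closure of the orbit of any other algebra, i.e. there is no nontrivial degeneration $\mu\rightarrow\lambda$ with $\mu\notin\overline{\Orb(\lambda)}$. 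Equivalently, $\overline{\Orb(\lambda)}$ is a component precisely when $\Orb(\lambda)$ is not contained in the closure of any strictly larger orbit.

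First I would handle $R_3$ and $R_4$. From Proposition~\ref{pr1} we read $\dim\Der(R_3)=\dim\Der(R_4)=3$, whence $\dim\Orb(R_3)=\dim\Orb(R_4)=(n+1)^2-3$. The only algebras in the relevant family with orbit dimension strictly larger are those with $\dim\Der=2$, namely $RNF_n$ and $R_1$ by Proposition~\ref{pr0}. So the candidates for a degeneration $\mu\rightarrow R_3$ (or $R_4$) are $\mu=RNF_n$ and $\mu=R_1$, together with algebras outside this family. The key tool is the $(1,1)$-invariant, which by the remark after Proposition~\ref{prop2} is preserved under degeneration: since $c_{1,1}$ is constant on an orbit closure, any degeneration $\mu\rightarrow R_3$ forces $c_{1,1}(\mu)=c_{1,1}(R_3)$. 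I would compute from Proposition~\ref{prop2} that $c_{1,1}(R_3)=\tfrac{3n(n-3)^2}{2(2n^2-9n+15)}$ and $c_{1,1}(R_4)=\tfrac{3(n^2-5n+10)^2}{2(2n^3-15n^2+37n-18)}$ differ from both $c_{1,1}(RNF_n)=\tfrac{3n(n+1)}{2(2n+1)}$ and $c_{1,1}(R_1)=\tfrac{3(n^2-n+2)^2}{2(6+n(n-1)(2n-1))}$, ruling out $RNF_n$ and $R_1$ as degeneration sources. Here one must also use Proposition~\ref{P:dim}: a solvable algebra degenerating to $R_3$ (nilradical $F_n^1$, of dimension $n$) must have nilradical of dimension $\le n$, and since every algebra of strictly larger orbit dimension in the relevant range is solvable with $n$-dimensional nilradical, Proposition~\ref{P:dim} then forces a degeneration of nilradicals $N\rightarrow F_n^1$; combined with the $c_{1,1}$ obstruction this eliminates all candidates, so no $\mu\rightarrow R_3$ exists and $\overline{\Orb(R_3)}$ is a component; the argument for $R_4$ is identical.

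For the family $\overline{\cup_{\alpha\in K}\Orb(R_2(\alpha))}$ the approach is parallel but must account for the one-parameter family. Its dimension is $\dim\Orb(R_2(\alpha))+1=(n+1)^2-3+1=(n+1)^2-2$, matching the generic orbit dimension of $RNF_n$ and $R_1$, so I would argue that this family is \emph{not} contained in either $\overline{\Orb(RNF_n)}$ or $\overline{\Orb(R_1)}$: the $(1,1)$-invariant $c_{1,1}(R_2(\alpha))=\tfrac{\bigl(\sum_{k}(k-1+\alpha)\bigr)^2}{\sum_k(k-1+\alpha)^2}$ (with the normalization of Proposition~\ref{prop2}) varies nonconstantly with $\alpha$, so the family sweeps out a continuum of invariant values and cannot sit inside a single orbit closure, which carries a fixed $c_{1,1}$. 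For $\alpha\in K$ we have excluded exactly the four special values $\{0,1,1-n,2-n\}$ at which $c_{1,1}(R_2(\alpha))$ coincides with $c_{1,1}$ of $R_1$, $RNF_n$, $R_3$, $R_4$ respectively (Remark~\ref{rem1}), so on $K$ the family meets none of those four orbit closures; together with Proposition~\ref{P:dim} this shows the family is maximal. The main obstacle will be this last step: one must rule out not only degenerations \emph{from} the known algebras of the family, but from \emph{any} algebra of $\Leib_{n+1}$ of orbit dimension $\ge(n+1)^2-2$, including those with nilradical not of type $F_n^1$. I expect to dispatch these by Proposition~\ref{P:dim} (controlling the nilradical dimension) together with the full list of invariants in \eqref{eqinvariant} and the constancy of $c_{1,1}$, but verifying that the excluded value set is \emph{exactly} $\{0,1,1-n,2-n\}$ and that no exotic source degenerates into the interior of the family is the delicate computational heart of the proof.
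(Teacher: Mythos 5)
Your overall strategy matches the paper's: rule out any nontrivial degeneration $X\rightarrow R_3$ (resp.\ $R_4$, resp.\ into the $R_2(\alpha)$ family) by combining the derivation-dimension inequality from \eqref{eqinvariant}, the $(1,1)$-invariant, and Proposition~\ref{P:dim}; and your explanation of why exactly the four values $\{0,1,1-n,2-n\}$ are excluded from $K$ (Remark~\ref{rem1} together with the explicit degenerations $RNF_n\rightarrow R_2(1)$, $R_1\rightarrow R_2(0)$, $R_3\rightarrow R_2(1-n)$, $R_4\rightarrow R_2(2-n)$ of Example~\ref{exam}) is correct and is precisely what the paper uses.

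However, there is a genuine gap at the start of your argument: you restrict the candidate sources $X$ of a degeneration to ``the relevant family'' of solvable algebras with nilradical of type $F_n^1$ (or $NF_n$), but you never justify that an arbitrary $X\in\Leib_{n+1}$ with $X\rightarrow R_3$ must be solvable. This matters for two reasons. First, Proposition~\ref{P:dim}, which you invoke to control the nilradical, is stated only for solvable Leibniz algebras, so it cannot be applied to a general $X$. Second, your claim that the only algebras of larger orbit dimension are $RNF_n$ and $R_1$ presupposes the very restriction you are trying to establish. The paper closes this gap with a short but essential argument: if $X\rightarrow R_3$ then $\dim\Der(X)<\dim\Der(R_3)=3$; if $X$ were not solvable, Levi's theorem for Leibniz algebras \cite{Bar} would give a semisimple Lie subalgebra $S$ of dimension at least $3$, and the operators $\ad(s)$, $s\in S$, would already produce $\dim\Der(X)\geq 3$, a contradiction. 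Hence $X$ is solvable; then $\dim X^2\geq\dim R_3^2=n$ forces the nilradical of $X$ to be $n$-dimensional, Proposition~\ref{P:dim} forces that nilradical to degenerate to $F_n^1$, and the classification (Theorems~\ref{T:fil}, \ref{th2.5}, \ref{thm33} and the results of \cite{Camacho}) together with the degeneration $F_n^1\rightarrow F_n^2$ pins $X$ down to the finite list $RNF_n$, $R_1$--$R_5(\alpha_i)$, $R(L_1)$, $R(L_2^{\beta_{(n+3)/2}})$, $R(L_3^j)$. You should also explicitly dispose of the candidates $R(L_1)$, $R(L_2^{\beta_{(n+3)/2}})$, $R(L_3^j)$ and $R_5(\alpha_i)$ (via their $c_{1,1}$ values and derivation dimensions from Propositions~\ref{pr1} and~\ref{prop2}), which your write-up leaves implicit. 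With the solvability step and the nilradical enumeration added, your argument becomes the paper's proof.
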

\begin{proof} Firstly, we will prove the assertion of the theorem  for the algebra $R_3$, that is, we will prove the non-existence of degeneration
 from any algebra $X\in \Leib_{n+1}$ to the algebra $R_3$. Let us assume the contrary, i.e., $X\rightarrow R_3$, then from Proposition~\ref{pr1}
  and inequalities \eqref{eqinvariant} we conclude that $\dim(\Der X) < \dim(\Der R_3)=3$.
 Actually, the algebra $X$ is a solvable Leibniz algebra. Indeed, if $X$ is not solvable then by Levi's Theorem \cite{Bar} it  decomposes
  into a semidirect sum of a semi-simple Lie algebra $S$ and a solvable radical $\Rad(X)$. Since in a semi-simple Lie algebra
  (which, clearly, has dimension greater or equal to 3) an operator $\ad(x)$ for any $x\in S$ is a derivation of the algebra $X$,
   then we obtain $\dim(\Der X) \geq 3$. Therefore, the algebra $X$ is solvable.

Taking into account that $\dim R_3^2 = n$ and solvability of $X$ from inequalities \eqref{eqinvariant}, we derive that $\dim X^2 = n$.
 Since the square of a solvable algebra belongs to the nilradical, then the solvable algebra $X$ has nilradical of dimension $n$.
  Therefore, the nilradical of $X$ degenerates to $F_n^1$ (because nilradical of the algebra $R_3$ is $F_n^1$).
   Again applying inequality \eqref{eqinvariant} we derive that the nilradical of $X$ is one of the following algebras:
\[NF_n, \qquad F_n^1(\alpha_4, \alpha_5, \dots, \alpha_n, \theta), \qquad F_2(\beta_4, \dots, \beta_n,\gamma), \qquad F_3(\theta_1,\theta_2,\theta_3).\]

The non-trivial degeneration $F_n^1 \rightarrow F_n^2$ from Example~\ref{exam} implies the non-existence of degeneration from the algebra $F_n^2$ to the algebra $F_n^1$.

From arguments above we conclude that the possibilities for the solvable algebra $X$ are the following:
\[RNF_n, \qquad R_1 - R_5(\alpha_i), \qquad R(L_{1}), \qquad R(L_{2}^{\beta_{\frac{n+3}{2}}}), \qquad R(L_{3}^j).\]

Comparing $c_{1,1}$ invariants from Proposition~\ref{prop2}, dimensions of the spaces of derivations from Propositions~\ref{pr0}, \ref{pr1} and applying inequalities \eqref{eqinvariant},
 we conclude that none of the above algebras degenerates to the algebra $R_3$. Hence, $\overline{\Orb(R_3)}$ is an irreducible component of the variety $\Leib_{n+1}$.

 The assertion of theorem for the algebra $R_4$ and the family of algebras $R_2(\alpha)$, with $\alpha\in K$,
  is proved applying the same arguments as used for the algebra $R_3$ and degenerations from Example~\ref{exam}.
\end{proof}

Let us assume that $X\rightarrow R_5(\alpha_4, \dots, \alpha_n)$. Since $\dim \Der(R_5(\alpha_4, \dots, \alpha_n))>3 $
 then from dimensions arguments as they were used for the algebra $R_3$ we cannot assert the solvability of $X$.
  Nevertheless applying similar arguments as in Theorem~\ref{thm111} we obtain the following result.
\begin{prop}
 $\overline{\cup_{\alpha\in \mathbb{C^*}}\Orb(R_5(\alpha_4, \dots, \alpha_n))}$ forms an irreducible component
 in the subvariety of $(n+1)$-dimensional solvable Leibniz algebras of the variety $\Leib_{n+1}$.
\end{prop}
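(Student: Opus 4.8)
The statement requires two things: that the set $\overline{\cup_{\alpha\in\mathbb{C}^*}\Orb(R_5(\alpha_4,\dots,\alpha_n))}$ is irreducible, and that it is maximal among irreducible closed subsets of the solvable subvariety, i.e.\ that it is genuinely a component. The plan is to settle irreducibility first and then, mirroring the proof of Theorem~\ref{thm111}, to exclude the existence of a nontrivial degeneration onto a generic member of the family.

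For irreducibility I would realize the family as the image of an irreducible variety. The assignment $(g,\alpha)\mapsto g*\lambda\big(R_5(\alpha_4,\dots,\alpha_n)\big)$ is a morphism from $\GL_{n+1}(\mathbb{C})\times(\mathbb{C}^{n-3}\setminus\{0\})$ into $\Leib_{n+1}$ whose image is exactly $\cup_{\alpha\in\mathbb{C}^*}\Orb(R_5(\alpha))$. Since $\GL_{n+1}(\mathbb{C})$ and $\mathbb{C}^{n-3}\setminus\{0\}$ are irreducible, so is their product, and therefore the image and its Zariski closure are irreducible. This closure lies in the solvable subvariety, because solvability is preserved both under the $\GL_{n+1}$-action and under degeneration.

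For maximality I would suppose, towards a contradiction, that some solvable $X\in\Leib_{n+1}$ not isomorphic to a member of the family satisfies $X\to R_5(\alpha_4,\dots,\alpha_n)$ for generic parameters, and show this forces $X$ into the family. The decisive point, flagged in the remark preceding the statement, is that $\dim\Der\big(R_5(\alpha_i)\big)=n-1>3$, so the derivation inequality in \eqref{eqinvariant} no longer forces $X$ to be solvable as it did for $R_3$; this is exactly why the assertion is confined to the solvable subvariety, where solvability of $X$ is granted. Granting it, I would proceed as for $R_3$: from $\dim X^2\ge\dim R_5^2=n-1$, $X^2\subseteq N_X$, and Proposition~\ref{P:dim} (which gives $\dim N_X\le\dim F_n^1=n$), I would argue $\dim N_X=n$, so that $N_X$ is an $n$-dimensional nilpotent algebra degenerating to $F_n^1$; as $\dim N_X^2\ge\dim (F_n^1)^2=n-2$, $N_X$ is (null-)filiform, and using the degeneration $F_n^1\to F_n^2$ of Example~\ref{exam} to discard $F_n^2$ together with the structural results recalled after Theorem~\ref{thm33} to discard the $F_3$ (Lie) cases, Theorems~\ref{thm33} and~\ref{th2.5} leave $X$ among $RNF_n,\,R_1,\,R_2(\beta),\,R_3,\,R_4,\,R_5(\beta),\,R(L_1),\,R(L_2^{\beta_{\frac{n+3}{2}}}),\,R(L_3^{j})$. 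Comparing $(1,1)$-invariants from Proposition~\ref{prop2}, a degeneration forces $c_{1,1}(X)=c_{1,1}(R_5(\alpha))=n-1$, which eliminates every candidate whose invariant differs from $n-1$ and leaves only the $R_5$ family. Within that family the sole degeneration is $R_5(\beta)\to R_5(0)$ of Example~\ref{exam}, so generic members do not degenerate to one another, while $R_5(0)\not\to R_5(\alpha)$ since $\dim\Der R_5(0)=n>n-1=\dim\Der R_5(\alpha)$ would violate \eqref{eqinvariant}. Hence $X$ lies in the family and the closure is a component.

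The main obstacle I anticipate is the nilradical step. For $R_3$ one had $\dim R_3^2=n$ equal to the nilradical dimension, so solvability together with $\dim X^2\ge n$ and $X^2\subsetneq X$ forced $\dim X^2=n=\dim N_X$ and hence $N_X\to F_n^1$ at a stroke. Here $\dim R_5^2=n-1$ is strictly smaller than $\dim F_n^1=n$ — the nilradical generator $e_1$ is not in $R_5^2$ — so the same chain yields only $n-1\le\dim N_X\le n$, and the borderline case $\dim N_X=n-1$, in which $X^2=N_X$ is an $(n-1)$-dimensional abelian nilradical with a two-dimensional abelian complement, falls outside the catalogue of Theorem~\ref{thm33} and must be excluded separately; I would do this either by showing no such $X$ can simultaneously satisfy $c_{1,1}(X)=n-1$ and $\dim\Der X<n-1$, or by a direct dimension count of the family against $\dim\overline{\cup_\alpha\Orb(R_5(\alpha))}$. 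The second, milder, difficulty is that $c_{1,1}$ does not separate $R_5(0)$ from $R_5(\alpha)$, so it is the derivation-dimension inequality, rather than the $(1,1)$-invariant, that must close that final case.
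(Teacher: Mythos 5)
Your overall route coincides with the paper's: the published proof of this proposition consists essentially of the sentence preceding it plus the phrase ``applying similar arguments as in Theorem~\ref{thm111}'', i.e.\ solvability of $X$ is granted for free by working inside the solvable subvariety, and the rest of the Theorem~\ref{thm111} machinery (the $(1,1)$-invariants of Proposition~\ref{prop2}, the derivation dimensions of Proposition~\ref{pr1}, Proposition~\ref{P:dim}, and the degenerations of Example~\ref{exam}) is rerun. Your irreducibility argument via the image of $\GL_{n+1}(\mathbb{C})\times(\mathbb{C}^{n-3}\setminus\{0\})$ is the standard one, and your elimination of the catalogued candidates by $c_{1,1}=n-1$ together with $\dim\Der R_5(0)=n>n-1=\dim\Der R_5(\alpha_i)$ is exactly what the paper intends.

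The one genuine gap is precisely the one you flag yourself, and it should be said plainly that the paper does not close it either. For $R_3$ the chain $\dim X^2\ge\dim R_3^2=n$, $X^2\subseteq N_X$, $\dim N_X\le n$ forces $\dim N_X=n$ and hence $N_X\to F_n^1$ via Proposition~\ref{P:dim}; for $R_5(\alpha_i)$ one only gets $n-1\le\dim N_X\le n$ because $e_1\notin R_5(\alpha_i)^2$, and in the case $\dim N_X=n-1$ Proposition~\ref{P:dim} yields no degeneration of nilradicals at all, so $X$ escapes the catalogue built from Theorems~\ref{thm33} and~\ref{th2.5} entirely. Your two proposed remedies (deriving a contradiction from $c_{1,1}(X)=n-1$ together with $\dim\Der X<n-1$, or a dimension count of the family) are plausible but neither is carried out, so as written the argument is incomplete at this step --- to the same extent that the paper's is. One further inaccuracy inside the flagged case: from $X^2=N_X$ with $\dim N_X=n-1$ you cannot conclude that $N_X$ is abelian, nor that it admits a two-dimensional abelian complement; the case to be excluded is the larger class of all $(n+1)$-dimensional solvable algebras with $(n-1)$-dimensional nilradical satisfying $\dim X^i\ge n-1$ for all $i\ge 2$ (plus the invariant constraints), and it is not a priori empty, so the exclusion genuinely requires an argument rather than a remark.
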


\subsection{Second cohomology of the algebras $R_2-R_5$}

In this subsection we give  descriptions of the second cohomology group
of the algebras $R_2 - R_5$ by presenting their bases. In fact, we find bases of the space $BL^2$ and $ZL^2$ for these algebras.
 These descriptions can be applied in the study of infinitesimal deformations and extensions of mentioned algebras (see works \cite{Fial2,Fial3,Khak,KhudOmir,Mil,Nijen} and references therein).

In the next theorem we present the general form of $2$-cocycles for the algebra $R_3$.

\begin{thm} \label{thR_3} An arbitrary $\varphi\in ZL^2(R_3, R_3)$ has the following form:
\begin{align*}
\varphi(e_1,e_1) & = \sum\limits_{i=3}^{n}a_{1,i}e_i,\\
\varphi(e_i,e_1) & = a_{i,0}x+ \sum\limits_{s=1}^{n}a_{i,s}e_s, \quad 2\leq i\leq n-1,\\
\varphi(e_n,e_1) & = a_{n-1,0}e_1-\sum\limits_{i=3}^{n-1}(\sum\limits_{t=2}^{i-1}a_{n+t-i,t})e_i+a_{n,n}e_n,\\
\varphi(e_1,e_2) & = b_{1,1}e_1,\\
\varphi(e_i,e_2) & = (i-n)b_{1,1}e_i+b_{2,3}e_{i+1}, \quad 2\leq i\leq n,\\
\varphi(e_1,e_i) & = -a_{i-1,0}e_1, \ 3\leq i\leq n,\\
\varphi(e_i,e_3) & = (n-i)a_{2,0}e_i-(a_{2,1}+b_{1,1})e_{i+1}, \quad 2\leq i\leq n-1,\\
\varphi(e_i,e_j) & = (n-i)a_{j-1,0}e_i+(a_{j-2,0}-a_{j-1,1})e_{i+1}, \ 2\leq i\leq n-1, \quad 4\leq j\leq n,\\
\varphi(e_1,x) & = \frac{2}{(n-2)(n-1)}\sum\limits_{t=2}^{n}a_{t,t} x+c_{1,1}e_1+\sum\limits_{i=2}^{n-1}(i-n-1)a_{1,i+1}e_i+c_{1,n}e_n,\\
\varphi(e_2,x) & = (n-2)b_{1,1}x+ (n-1)b_{2,3}e_1+\sum\limits_{i=2}^{n}c_{2,i}e_i, \displaybreak \\
\varphi(e_3,x) & =(3-n)a_{2,0}x+ (2-n)(a_{2,1}+b_{1,1})e_1+(a_{2,2}-\frac{2}{n-1}\sum\limits_{t=2}^{n}a_{t,t})e_2+(c_{2,2}+c_{1,1})e_3 \\
{} & + \sum\limits_{i=4}^{n-1}(c_{2,i-1}+(3-i)a_{2,i})e_i+(c_{2,n-1}+(3-n)a_{2,n}-a_{2,0})e_n,\\
\varphi(e_i,x) & =(i-n)a_{i-1,0}x +(n-i+1)(a_{i-2,0}-a_{i-1,1})e_1+\sum\limits_{s=2}^{i-2}(i-s)\sum\limits_{t=2}^{s}a_{i+t-s-1,t}e_s \\
{} & +(\sum\limits_{t=2}^{i-1}a_{t,t}+ \frac{(i+1-2n)(i-2)}{(n-2)(n-1)}\sum\limits_{t=2}^{n}a_{t,t})e_{i-1} +(c_{2,2}+(i-2)c_{1,1})e_i \\
{} & + \sum\limits_{s=i+1}^{n-1}(c_{2,s-i+2}+(i-s)\sum\limits_{t=2}^{i-1}a_{t,s-i+t+1})e_s \\
{} &+(c_{2,n-i+2}-a_{i-1,0}+(i-n)\sum\limits_{t=2}^{i-1}a_{t,n-i+t+1})e_n, \quad 4\leq i\leq n, \\
 \varphi(x,e_1) & = -\frac{2}{(n-2)(n-1)}\sum\limits_{t=2}^{n}a_{t,t}x - c_{1,1}e_1+a_{1,3}e_2+ \sum\limits_{i=3}^{n}d_{1,i}e_i,\\
\varphi(x,e_2) &= -b_{2,3}e_1+b_{1,1}e_n,\\
\varphi(x,e_3) &= (a_{2,1}+b_{1,1})e_1-a_{2,0}e_n,\\
\varphi(x,e_i) &= (a_{i-1,1}-a_{i-2,0})e_1-a_{i-1,0}e_n, \quad  4\leq i\leq n,\\
\varphi(x,x)& = a_{n-1,0}x + (a_{n-1,1}-a_{n-2,0})e_1 + \sum\limits_{i=2}^{n-2}\big((n-i)(a_{1,i+2}-d_{1,i+1})
+ \sum\limits_{s=2}^{i}a_{n-i+s-1,s}\big)e_i \\
{} & -(2c_{1,0}+d_{1,n}+c_{1,n}+a_{n,n})e_{n-1}+\beta_ne_n.
\end{align*}
\end{thm}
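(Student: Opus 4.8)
The plan is to solve outright the homogeneous linear system imposed by the Leibniz $2$-cocycle condition \eqref{E.Z2}. First I would write a general bilinear map $\varphi\colon R_3\times R_3\to R_3$ by prescribing $\varphi(u,v)$ as an arbitrary linear combination of the basis $\{e_1,\dots,e_n,x\}$ for every pair $(u,v)$ of basis vectors, which introduces $(n+1)^3$ scalar unknowns. The theorem is then nothing but the general solution of the equations $(d^2\varphi)(u,v,w)=0$ ranging over all basis triples $(u,v,w)$, where the brackets are read off from the multiplication table of $R_3$ in Theorem~\ref{thm33}. Since the only nonzero products are the right multiplications by $e_1$ (with $[e_i,e_1]=e_{i+1}$, $[x,e_1]=-e_1$) and by $x$ (with $[e_1,x]=e_1$, $[e_i,x]=(i-n)e_i$, $[x,x]=e_n$), most of the triples contribute trivial or very short relations, and the real work is to organize the nontrivial ones.

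The key reduction is the filiform recursion in the second argument. Evaluating \eqref{E.Z2} on a triple of the form $(u,e_i,e_1)$ and using $[e_i,e_1]=e_{i+1}$ gives
\[\varphi(u,e_{i+1})=-[u,\varphi(e_i,e_1)]+[\varphi(u,e_i),e_1]-[\varphi(u,e_1),e_i]+\varphi([u,e_i],e_1)-\varphi([u,e_1],e_i),\]
so that every value $\varphi(u,e_j)$ with $j\ge 3$ is expressed through $\varphi(u,e_1)$, $\varphi(u,e_2)$ and data already computed lower in the chain. Running this for $u=e_1,\dots,e_n,x$ collapses the second argument down to the columns $e_1,e_2,x$, and the remaining undetermined data are exactly the values $\varphi(\,\cdot\,,e_1)$, $\varphi(\,\cdot\,,e_2)$, $\varphi(\,\cdot\,,x)$; these supply the coefficients named $a_{i,s}$, $b_{i,j}$, $c_{i,s}$, $d_{1,i}$, $\beta_n$ in the statement.

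I would then feed these recursive expressions back into the triples not used in the recursion, above all those containing $x$ and those sitting at the top of the filiform chain. The boundary triples $(u,e_n,e_1)$, where $[e_n,e_1]=0$, yield the constraints that express $\varphi(e_n,e_1)$ and the various $e_n$-components in terms of the free coefficients $a_{i,s}$, while the scaling relations $[e_i,x]=(i-n)e_i$ together with $[x,x]=e_n$ fix the $x$-components and the diagonal entries of the $\varphi(e_i,x)$ and of $\varphi(x,x)$. The rational factors such as $\tfrac{2}{(n-2)(n-1)}\sum_{t=2}^{n}a_{t,t}$ arise precisely as the unique solution of the small linear subsystem produced by these $x$-triples once the filiform recursion has been substituted.

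The main obstacle is not any single identity but the compatibility of the two propagation directions together with the sheer bookkeeping. Because a value $\varphi(e_a,e_b)$ with $a,b\ge 3$ is reachable both by the second-argument recursion from $\varphi(e_a,e_{b-1})$ and, via the analogous triple $(e_{a-1},e_1,e_b)$, by a first-argument recursion from $\varphi(e_{a-1},e_b)$, one must verify that the two routes agree; this compatibility is what pins down the factors $(i-n)$ and the cumulative sums $\sum_{t=2}^{i-1}a_{t,t}$, and it is the step where careful indexing rather than any conceptual difficulty carries the argument. Collecting the surviving parameters and substituting them into the recursions then yields exactly the tables displayed in Theorem~\ref{thR_3}, after which one checks that no further relation is forced, so the listed expressions describe a general cocycle.
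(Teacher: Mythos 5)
Your proposal is the same direct-computation strategy the paper uses: impose the cocycle identity \eqref{E.Z2} on basis triples, exploit the filiform recursion in the second argument (triples $(u,e_j,e_1)$) and the analogous recursions from $(e_i,e_1,e_2)$ and $(e_i,e_1,x)$ to collapse everything onto the columns $\varphi(\cdot,e_1)$, $\varphi(\cdot,e_2)$, $\varphi(\cdot,x)$, and then use the boundary triples at $e_n$ and the $x$-triples to pin down the remaining coefficients (in particular the factor $\tfrac{2}{(n-2)(n-1)}\sum_{t}a_{t,t}$ comes from $(e_n,e_1,x)$, exactly as you predict). The plan is correct and matches the paper's proof in structure; only the explicit bookkeeping remains to be carried out.
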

\begin{proof}
We set
\[\left\{\begin{array}{ll}
\varphi(e_i,e_1) = a_{i,0}x +\sum\limits_{s=1}^{n}a_{i,s}e_s, \
1\leq i\leq n, &
\varphi(e_1,x) = c_{1,0}x+ \sum\limits_{i=1}^{n}c_{1,i}e_i,\\[1mm]
\varphi(e_1,e_2) = b_{1,0}x+\sum\limits_{i=1}^{n}b_{1,i}e_i, &
\varphi(e_2,x) = c_{2,0}x+\sum\limits_{i=1}^{n}c_{2,i}e_i, \\[1mm]
\varphi(e_2,e_2) = b_{2,0}x+\sum\limits_{i=1}^{n}b_{2,i}e_i, &
\varphi(x,e_1) = d_{1,0}x + \sum\limits_{i=1}^{n}d_{1,i}e_i, \\[1mm]
\varphi(x,x) = \beta_{0}x \sum\limits_{i=1}^{n}\beta_{i}e_i, &
\varphi(x,e_2) =d_{2,0}x+ \sum\limits_{i=1}^{n}d_{2,i}e_i.
\end{array} \right.\]

Applying equality \eqref{E.Z2} for the triple $(e_i, e_1, e_1)$, we obtain $[e_i,\varphi(e_1,e_1)]=0$.
 Hence, $\varphi(e_1,e_1) = \sum\limits_{i=2}^{n}a_{1,i}e_i$. Similarly,
  the equation $(d^2\varphi)(e_i, e_j, e_k)=0$, for $2\leq j,k\leq n$, leads to $[e_i,\varphi(e_j,e_k)]=0$. Consequently, we
have  $\varphi(e_j,e_k) \in \spann\left<e_2, e_3,\dots ,e_n\right>$.

Considering $(d^2\varphi)(e_1, e_1, e_2)=0$, we derive
\[\varphi(e_1,e_2) = b_{1,0}x + b_{1,1}e_1+b_{1,n}e_n.\]

Moreover, from $(d^2\varphi)(e_i, e_1, e_2)=0$ with $2 \leq i \leq
n-1$, we deduce
\[\varphi(e_{i+1},e_j)= [e_i,\varphi(e_1,e_2)]+[\varphi(e_i,e_2),e_1],\]
which inductively get
\[\varphi(e_i,e_2) = b_{1,0} \frac {(i-2)(i+1-2n)} {2}e_i+
\big((i-2)b_{1,1}+b_{2,2}\big)e_i+\sum\limits_{s=i+1}^{n}b_{2,s-i+2}e_s,
\quad  2\leq i\leq n.\]

Similarly, we have
\[\begin{array}{lllll}(d^2\varphi)(e_n, e_1, e_2)=0& \Rightarrow &  b_{1,0}
=0,\\[1mm]
(d^2\varphi)(e_1, e_2, x)=0 & \Rightarrow & b_{1,n} =0, & c_{2,0}
= (n-2)b_{1,1},\\[1mm] (d^2\varphi)(e_2, e_2, x)=0& \Rightarrow &
 b_{2,2} = (2-n)b_{1,1}, & c_{2,1} = (n-1)b_{1,3}, & b_{2,i}=0,
\ 4 \leq i \leq n.
\end{array}\]

Now we consider  $(d^2\varphi)(e_i, e_j, e_1)=0$ with $2\leq j\leq
n-1$. Then we have
\[\varphi(e_i,e_{j+1}) = [\varphi(e_i,e_j),e_1] - [e_i,\varphi(e_j,e_1)]- \varphi([e_i,e_1],e_j).\]

Applying the induction on $j$  for any $i$ and the equality above we obtain the following:
\[\begin{array}{ll}\varphi(e_1,e_j) =-a_{j-1,0}e_1, & 3\leq j\leq n, \\[1mm]
\varphi(e_i,e_3) = (n-i)a_{2,0}e_i-(a_{2,1}+b_{1,1})e_{i+1},  &
2\leq i\leq n-1,\\[1mm]
\varphi(e_i,e_j) = (n-i)a_{j-1,0}e_i+(a_{j-2,0}-a_{j-1,1})e_{i+1},
& 2\leq i\leq n-1, \ 4\leq j\leq n. \end{array}\]

On the other hand, the condition $(d^2\varphi)(e_i, e_n, e_1)=0$
implies \[a_{n, 0} =0, \quad a_{n,1} = a_{n-1,0}.\]

We consider equality \eqref{E.Z2} for the triple $(e_1, x, e_1)$, then we get \[d_{1,0} =-c_{1,0}, \quad
a_{1,2} =0, \quad c_{1,i} = (i-1-n)a_{1, i+1}, \ 2 \leq i \leq
n-1.\]

Thus, we have
\begin{align*}
\varphi(e_1,x) & =c_{1,0}x + c_{1,1}e_1+\sum\limits_{i=2}^{n-1}(i-n-1)a_{1,i+1}e_i+c_{1,n}e_n,\\
\varphi(e_2,x) & = (n-2)b_{1,1}x + (n-1)b_{2,3}e_1 + \sum\limits_{i=2}^{n}c_{2,i}e_i
\end{align*}
 and
 $[e_1,\varphi(e_i,x)]=(i-n)a_{i-1,0}e_1$ for $ 3\leq i\leq n$.

From the equality $(d^2\varphi)(e_i, e_1, x)=0, \ 2\leq i\leq n-1$,
we have
\begin{align*}
\varphi(e_{i+1},x) & = [\varphi(e_i,x),e_1] +  [e_i,\varphi(e_1,x)]
- [\varphi(e_i,e_1), x] + \varphi(e_i, [e_1,x]) + \varphi(
[e_i,x], e_1)\\
{} & = [\varphi(e_i,x),e_1] +  (i+1-n)x + c_{1,0}e_i+c_{1,1}e_{i+1} + (i-n)a_{i,1}e_1 + \sum\limits_{s=2}^n
(i+1-s)a_{i,s}e_s.
\end{align*}

Hence, we obtain inductively that
\[\begin{array}{ll}
\varphi(e_3,x) &
=(3-n)a_{2,0}x+
(2-n)(a_{2,1}+b_{1,1})e_1+(a_{2,2}+(2-n)c_{1,0})e_2+(c_{2,2}+c_{1,1})e_3
\\[1mm] &
+\sum\limits_{i=4}^{n-1}(c_{2,i-1}+(3-i)a_{2,i})e_i+(c_{2,n-1}+(3-n)a_{2,n}-a_{2,0})e_n,\\[1mm]
\varphi(e_i,x) &
= (i-n)a_{i-1,0}x + (n-i+1)(a_{i-2,0}-a_{i-1,1})e_1+\sum\limits_{s=2}^{i-2}(i-s)\sum\limits_{t=2}^{s}a_{i+t-s-1,t}e_s\\
&+\big(\frac{(i+1-2n)(i-2)}{2}c_{1,0}+\sum\limits_{t=2}^{i-1}a_{t,t}\big)e_{i-1}+(c_{2,2}+(i-2)c_{1,1})e_i
\\
&+\sum\limits_{s=i+1}^{n-1}(c_{2,s-i+2}+(i-s)\sum\limits_{t=2}^{i-1}a_{t,s-i+t+1})e_s+
(c_{2,n-i+2}-a_{i-1,0}+(i-n)\sum\limits_{t=2}^{i-1}a_{t,n-i+t+1})e_n,
\end{array}\]
where $4 \leq i \leq n$.

Moreover, the condition $(d^2\varphi)(e_n, e_1, x)=0$ implies
$[\varphi(e_n,x),e_1] - [\varphi(e_n,e_1), x] + \varphi(e_n,
[e_1,x])=0$, which derives
\[(n-1)a_{n,2}e_2 + \sum\limits_{s=3}^{n-1} (n-s+1)\big(a_{n,s}+\sum\limits_{t=2}^{s-1}a_{n+t-s,t}\big)e_s
+
\Big(-\frac{(n-1)(n-2)}{2}c_{1,0}+\sum\limits_{t=2}^{n}a_{t,t}\Big)
e_n=0.\]

Thus, we get
\[a_{n,2} =0,  \quad c_{1,0} = \frac{2}{(n-1)(n-2)}
\sum\limits_{t=2}^{n}a_{t,t}, \quad a_{n,s} = -
\sum\limits_{t=2}^{s-1}a_{n+t-s,t}, \ 3 \leq s \leq n-1.\]

Considering equality \eqref{E.Z2} for the
following triples $(e_2, x, e_1), \ (e_1, x, e_2), \ (x, e_1, e_2), \ (e_2, x, e_2)$  we obtain:
\[d_{1,1} = -c_{1,1}, \quad d_{2,0} =0, \quad d_{2,s}=0, \ 2 \leq s \leq n-1, \quad d_{2,1} = -b_{2,3}.\]

From $(d^2\varphi)(x, e_i, e_1)=0$ with $2\leq i\leq n-1,$ we have
\[\varphi(x,e_{i+1}) = [\varphi(x, e_i),e_1] - [x,\varphi(e_i,e_1)] + \varphi(e_1, e_i),\]
which inductively implies
\begin{align*}
\varphi(x,e_3) & =(a_{2,1}+b_{1,1})e_1-a_{2,0}e_n,\\
\varphi(x,e_i) & =(a_{i-1,1}-a_{i-2,0})e_1-a_{i-1,0}e_n, \qquad 4\leq i\leq n.
\end{align*}

Finally, the equalities $(d^2\varphi)(e_1, x, x)=(d^2\varphi)(x, e_1, x)=(d^2\varphi)(x, e_2, x)=0$ imply
\begin{align*}
d_{1,2} & = a_{1,3}, \qquad  \qquad  \  d_{2,n} = b_{1,1},\\
\beta_0 & = a_{n-1,0}, \qquad  \qquad \beta_1 = a_{n-1,1} - a_{n-2,0},\\
\beta_{n-1} & = -d_{1,n}-c_{1,n}-a_{n,n} -\frac{4}{(n-1)(n-2)}
\sum\limits_{t=2}^{n}a_{t,t},\\
\beta_i & = (n-i)(a_{1,i+2}-d_{1,i+i}) + \sum\limits_{s=2}^i a_{n-i+s-1}, \quad 2 \leq i \leq n-2,
\end{align*}
which complete the proof of the theorem.
\end{proof}

\begin{cor} $\dim ZL^2(R_3, R_3) = (n+1)^2 - 2 $ and $\dim HL^2(R_3, R_3) = 1$.
\end{cor}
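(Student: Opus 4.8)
The plan is to obtain $\dim ZL^2(R_3,R_3)$ by counting the free parameters in the explicit description of a generic $2$-cocycle furnished by Theorem~\ref{thR_3}, and then to pass to the cohomology group via the quotient formula $\dim HL^2 = \dim ZL^2 - \dim BL^2$, using the already-recorded value of the coboundary dimension. Since Theorem~\ref{thR_3} presents every $\varphi \in ZL^2(R_3,R_3)$ in closed form in terms of finitely many scalars, and the map from those scalars to cocycles is linear and injective, $\dim ZL^2(R_3,R_3)$ is exactly the number of independent parameters surviving after all cocycle relations have been imposed.

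First I would read off from Theorem~\ref{thR_3} which scalars remain genuinely free: the coefficients $a_{1,i}$ with $3\le i\le n$; the full blocks $a_{i,0},a_{i,1},\dots,a_{i,n}$ for $2\le i\le n-1$; the single coefficient $a_{n,n}$; the scalars $b_{1,1}$, $b_{2,3}$, $c_{1,1}$, $c_{1,n}$, $\beta_n$; the family $c_{2,i}$ with $2\le i\le n$; and the family $d_{1,i}$ with $3\le i\le n$. The $a$-parameters in the first two groups contribute $(n-2)+(n+1)(n-2)=(n-2)(n+2)=n^2-4$, while $a_{n,n}$ together with the remaining scalars contribute $1+2+2+1+(n-1)+(n-2)=2n+3$. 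Adding these gives $n^2+2n-1=(n+1)^2-2$, so $\dim ZL^2(R_3,R_3)=(n+1)^2-2$.

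Finally I would invoke the Corollary following Proposition~\ref{pr1}, where $\dim BL^2(R_3,R_3)=(n+1)^2-3$ is recorded, whence $\dim HL^2(R_3,R_3)=\big((n+1)^2-2\big)-\big((n+1)^2-3\big)=1$. The only delicate point is the bookkeeping in the parameter count: one must notice that the coefficient $a_{n,n}$ is left \emph{undetermined} by the cocycle conditions, because the relation $a_{n,s}=-\sum_{t=2}^{s-1}a_{n+t-s,t}$ established in the proof of Theorem~\ref{thR_3} constrains only the indices $3\le s\le n-1$ and therefore never reaches $s=n$. Overlooking this single surviving parameter would give $(n+1)^2-3$ for $\dim ZL^2$ and hence the incorrect value $0$ for $\dim HL^2$; verifying that exactly one such extra parameter survives is the crux of the computation.
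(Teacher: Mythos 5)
Your proposal is correct and matches what the paper intends: the corollary is stated without proof precisely because it follows by counting the surviving free parameters in the explicit cocycle description of Theorem~\ref{thR_3} (giving $(n+1)^2-2$, with the lone parameter $a_{n,n}$ correctly included) and subtracting $\dim BL^2(R_3,R_3)=(n+1)^2-3$ from the corollary to Proposition~\ref{pr1}. Your bookkeeping and the injectivity observation are both sound, so nothing is missing.
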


Now we present a basis of  $HL^2(R_3, R_3)$.

\begin{prop}\label{prop3.10}
The adjoint class $\overline{\xi}$ forms a basis of $HL^2(R_3,
R_3)$, where
\[\xi: \left\{\begin{array}{ll}
\xi(e_1,x) = e_1, \\[1mm]
\xi(e_i,x) = (i-2)e_i,& 3\leq i\leq n, \\[1mm]
\xi(x,e_1) = -e_1.
\end{array} \right.\]
\end{prop}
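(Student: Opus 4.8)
The plan is to invoke the preceding corollary, which asserts $\dim HL^2(R_3,R_3)=1$: it therefore suffices to verify that $\overline{\xi}$ is a \emph{nonzero} class, that is, that $\xi\in ZL^2(R_3,R_3)$ while $\xi\notin BL^2(R_3,R_3)$. Any single such cocycle then spans $HL^2(R_3,R_3)$ and hence forms a basis.

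For the first point I would not re-check the cocycle identity \eqref{E.Z2} on every triple, but instead recognise $\xi$ as a specialization of the general $2$-cocycle of Theorem~\ref{thR_3}. Setting the parameter $c_{1,1}=1$ and every other free parameter (all the $a_{i,s}$, $b_{1,1}$, $b_{2,3}$, $c_{1,n}$, $c_{2,i}$, $d_{1,i}$, $\beta_n$, and so on) equal to $0$, the formulas of Theorem~\ref{thR_3} collapse to $\varphi(e_1,x)=e_1$, $\varphi(e_i,x)=(i-2)e_i$ for $3\le i\le n$, $\varphi(x,e_1)=-e_1$, with $\varphi=0$ otherwise; this is precisely $\xi$. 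Hence $\xi\in ZL^2(R_3,R_3)$ for free. One may also confirm $(d^2\xi)=0$ directly on, say, the triple $(e_1,x,e_1)$ as a consistency test.

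For the second point, suppose toward a contradiction that $\xi=f$ is the coboundary \eqref{E.B2} of some linear map $d$, and write $q_x$ for the coefficient of $x$ in $d(x)$. Evaluating $f(e_1,x)=[d(e_1),x]+[e_1,d(x)]-d(e_1)$ and using $[e_1,x]=e_1$, $[e_1,e_j]=0$ and $[e_j,x]=(j-n)e_j$ for $j\ge 2$, all contributions of $d(e_1)$ cancel and the coefficient of $e_1$ reduces to $q_x$; matching $\xi(e_1,x)=e_1$ forces $q_x=1$. Next I would evaluate $f(e_2,x)=[d(e_2),x]+[e_2,d(x)]-(2-n)\,d(e_2)$; since $[e_2,e_j]=0$ for $j\ge 2$, the contributions of $d(e_2)$ again cancel in the $e_2$-component, which reduces to $q_x(2-n)=2-n$. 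But $\xi(e_2,x)=0$, so this would require $2-n=0$, impossible for $n\ge 3$. Thus no such $d$ exists and $\xi\notin BL^2(R_3,R_3)$, which finishes the argument.

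The main obstacle is this second step: one has to isolate a single component of the coboundary $f$ whose value is rigidly fixed by the structure constants of $R_3$ independently of the many free entries of $d$. The key observation is that in the $e_i$-component of $f(e_i,x)$ the terms coming from $d(e_i)$ cancel identically, leaving only $q_x(i-n)$; comparing this rigid quantity against the value $i-2$ dictated by $\xi$ produces the incompatibility (here read off concretely from $i=1$ and $i=2$).
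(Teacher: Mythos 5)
Your proof is correct, but it takes a genuinely different route from the paper's for the decisive step. The paper's proof of Proposition~\ref{prop3.10} builds an explicit spanning set $\{g_{i,j}\}$ of $BL^2(R_3,R_3)$ from the elementary endomorphisms $f_{i,j}$, expresses every $g_{i,j}$ in the cocycle basis $\{\varphi_{i,j},\psi_k,\eta_k,\rho_s\}$ extracted from Theorem~\ref{thR_3}, and reads off from that table that $\psi_3=\varphi(c_{1,1})$ --- which is exactly your $\xi$ --- is not a coboundary, while $\psi_3+(n-2)\eta_2$ is. You instead combine the already-stated corollary $\dim HL^2(R_3,R_3)=1$ with a direct obstruction: assuming $\xi=f$ for some linear map $d$ as in \eqref{E.B2}, the $e_1$-component of $f(e_1,x)$ equals the $x$-coefficient $q_x$ of $d(x)$ (the contributions of $d(e_1)$ to that component cancel because $[e_1,x]=e_1$), forcing $q_x=1$, while the $e_2$-component of $f(e_2,x)$ equals $q_x(2-n)$ and must vanish since $\xi(e_2,x)=0$ --- a contradiction for $n\ge 3$. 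Both coefficient computations check out against the multiplication table of $R_3$, and your identification of $\xi$ with the $c_{1,1}$-specialization of the general cocycle of Theorem~\ref{thR_3} is precisely the paper's $\psi_3$, so the cocycle verification is legitimate. What your argument buys is economy: two coefficient computations replace the full table of the $g_{i,j}$. What the paper's bookkeeping buys is the explicit picture of $BL^2$ inside $ZL^2$ (for instance the relation $\overline{\psi_3}=-(n-2)\,\overline{\eta_2}$), and it does not presuppose the dimension count, whereas your argument leans on the preceding corollary.
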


\begin{proof}
In order to find a basis of $HL^2(R_3,R_3)$ we need to describe linear independent elements which lie in $ZL^2(R_3, R_3)$ and do not lie in $BL^2(R_3, R_3)$.
 For achieve this purpose we will find a basis of $2$-cocycles and $2$-coboundaries.

Since an arbitrary element of $ZL^2(R_3, R_3)$ has the form of Theorem~\ref{thR_3} we shall use this description.

Note that there are parameters $(a_{i,j}, b_{1,1}, b_{2,3}, c_{1,1}, c_{1,n}, \beta, c_{2,k}, d_{1,s})$ in the general form of elements $ZL^2(R_3, R_3)$.
 One of the natural basis of the space $ZL^2$ is a basis whose basis elements are obtained by the instrumentality of these parameters. For the fixed pair $(i,j)$ we
denote by $\varphi(a_{i,j})$ a cocycle which has $a_{i,j} =1$ and all other  parameters  are equal to zero. Define such type of notation for other parameters.

Set
\begin{align*}
\varphi_{i,j} & = \varphi(a_{i,j}), \quad
\psi_{1}= \varphi(b_{1,1}), \quad \psi_{2}= \varphi(b_{2,3}),
\quad \psi_{3}= \varphi(c_{1,1}), \\
\psi_{4} & = \varphi(c_{1,n}), \quad \psi_{5}= \varphi(\beta),\qquad \eta_{k}=
\varphi(c_{2,k}),
 \quad \rho_{s}= \varphi(d_{1,s}),
 \end{align*}
where $1\leq i \leq n $, $0\leq j \leq n$, $ 2\leq k \leq n $,
$3\leq s \leq n$, and
\[(i,j) \notin \{(1,0), (1,1), (1,2), (n,0),(n,1), \dots, (n, n-1)\}.\]

In order to find a basis of $BL^2(R_3, R_3)$ we consider the
endomorphisms $f_{j,k} :R_3 \rightarrow R_3$ defined as follow
\begin{align*}
f_{i,j}(e_i) & =e_j,  \qquad 1 \leq i, j \leq n,\\
f_{i,n+1}(e_i) & =x, \qquad  \ 1 \leq i \leq n,\\
f_{n+1,j}(x) & = e_j, \qquad  1 \leq j \leq n,\\
f_{n+1,n+1}(x) &  = x,
\end{align*}
where in the expansion of endomorphisms the omitted values are assumed to be zero.

Consider  \[g_{i,j}(x,y) = [f_{i,j}(x),y] + [x,f_{i,j}(y)] -
f_{i,j}([x,y]).\]

Note that $g_{i,j} \in BL^2(R_3, R_3)$ and now we separate a basis
from these elements. Since the dimension of the space $\Der(R_3)$ is equal to 3, then to take a
basis we should exclude three elements $g_{i,j}$. The description
of $\Der(R_3)$ allow  us to release the elements $g_{1,1},g_{2,3}$
and $g_{n+1,n}$.

By direct computation we obtain
\[g_{j,k} : \left\{\begin{array}{ll}
g_{1,2} = -\varphi_{1,3},\\[1mm]
g_{1,i} = -\rho_i-\varphi_{1,i+1}, & 3\leq i\leq n-1,\\[1mm]
g_{1,n} = \psi_4 -\rho_n,\\[1mm]
g_{1,n+1} = \sum\limits_{k=2}^{n-1}(n-k)\varphi_{k,k}-\psi_4-\rho_n,\\[1mm]
g_{2,1} = -\psi_2,\\[1mm]
g_{2,i} = -\varphi_{2,i+1}+(2-i)\eta_i, & 2\leq i\leq n-1, \ i\neq 3,\\[1mm]
g_{2,n} = (2-n)\eta_n,\\[1mm]
g_{2,n+1} = \varphi_{2,1}-\psi_1-\eta_n,   \\[1mm]
g_{i,k} = \varphi_{i-1,k}-\varphi_{i,k+1}, & 3\leq i\leq n-1, \ 1\leq k\leq n-1, \\[1mm]
g_{i,n} = \varphi_{i-1,n}, & 3\leq i\leq n-1,  \\[1mm]
g_{i,n+1} = \varphi_{i-1,0}+\varphi_{i,1}, & 3\leq i\leq n-1,\\[1mm]
g_{n,1} =\varphi_{n-1,1},  \\[1mm]
g_{n,i} = \varphi_{n-1,i}, &  2\leq i\leq n-2,  \\[1mm]
g_{n,n-1} = \varphi_{n-1,n-1}-\varphi_{n,n},  \\[1mm]
g_{n,n} = \varphi_{n-1,n}+\psi_5,\\[1mm]
g_{n,n+1} = \varphi_{n-1,0}, \\[1mm]
g_{n+1,1} =-\eta_3,  \\[1mm]
g_{n+1,i} = -\rho_{i+1}, &  2\leq i\leq n-1,     \\[1mm]
g_{n+1,n+1} =\psi_3-2\psi_5+(n-2)\eta_2.
\end{array} \right.\]

From these equalities, it is not difficult to check that $\psi_3$
and $\eta_2$ do not belong to $BL^2(R_3, R_3)$, but $\psi_3
+(n-2)\eta_2 \in BL^2(R_3, R_3)$. Thus, we can take the adjoint class of $\psi_3$ as a basis of
$ZL^2(R_3, R_3)$.
\end{proof}

\begin{thm}\label{thmR_4}
An arbitrary $\varphi\in ZL^2(R_4, R_4)$ has the following form:
\begin{align*}
\varphi(e_1,e_1) &= \sum\limits_{i=3}^{n}a_{1,i}e_i,\\
\varphi(e_i,e_1) &= a_{i,0}x+ \sum\limits_{s=1}^{n}a_{i,s}e_s, \ 2\leq i\leq n-1,\\
\varphi(e_n,e_1) & = a_{n-1,0}e_1-\sum\limits_{i=3}^{n-1}(\sum\limits_{t=2}^{i-1}a_{n+t-i,t})e_i-(\sum\limits_{t=2}^{n-1}a_{t,t}+\frac{(n-4)(n-1)}{2}d_{1,0})e_n,\\
 \varphi(e_1,e_2) & = b_{1,1}e_1+b_{1,1}e_n,\\
\varphi(e_i,e_2) &= (i-n+1)b_{1,1}e_i+b_{2,3}e_{i+1}, \quad 2\leq i\leq n,\\
 \varphi(e_1,e_i) &= -a_{i-1,0}e_1-a_{i-1,0}e_n, \quad 3\leq i\leq n,\\
\varphi(e_i,e_3) &= (n-i-1)a_{2,0}e_i-(a_{2,1}+b_{1,1})e_{i+1}, \quad 2\leq i\leq n-1,\\
\varphi(e_n,e_j) &= -a_{j-1,0}e_n, \quad 3\leq j\leq n,\\
\varphi(e_i,e_j) &= (n-i-1)a_{j-1,0}e_i+(a_{j-2,0}-a_{j-1,1})e_{i+1}, \quad 2\leq i\leq n-1, \quad 4\leq j\leq n,\\
\varphi(e_1,x) &=(a_{n-1,0}-d_{1,0})x +(a_{n-1,1}-a_{n-2,0}-d_{1,1})e_1+\sum\limits_{i=2}^{n-2}(\sum\limits_{t=2}^{i}a_{n+t-i-1,t}-(n-i)a_{1,i+1})e_i\\
 {} &+(\sum\limits_{t=2}^{n-1}a_{t,t}-a_{1,n}+\frac{(n-2)(n-3)}{2}d_{1,0})e_{n-1}+c_{1,n}e_n,\\
\varphi(e_2,x) &
=(n-3)b_{1,1}x + (n-2)b_{2,3}e_1+\sum\limits_{i=2}^{n}c_{2,i}e_i,\displaybreak\\
\varphi(e_3,x) &=(4-n)a_{2,0}x
+(3-n)(a_{2,1}+b_{1,1})e_1+(a_{2,2}+(n-3)d_{1,0})e_2+(c_{2,2}-d_{1,1})e_3\\
{} & +\sum\limits_{i=4}^{n-2}(c_{2,i-1}+(3-i)a_{2,i})e_i +(c_{2,n-2}+(4-n)a_{2,n-1}+a_{2,0})e_{n-1} \\
{} &  +(c_{2,n-1}+(3-n)a_{2,n}-a_{2,1})e_n, \\
\varphi(e_i,x) & =(i-n+1)a_{i-1,0}x+(n-i)(a_{i-2,0}-a_{i-1,1})e_1\\
{} &+\sum\limits_{s=2}^{i-2}(i-s)\sum\limits_{t=2}^{s}a_{i+t-s-1,t}e_s+ (\sum\limits_{t=2}^{i-1}a_{t,t}+\frac{(2n-i-3)(i-2)}{2}d_{1,0})e_{i-1}\\
{} &+(c_{2,2}+(2-i)d_{1,1})e_i+\sum\limits_{s=i+1}^{n-2}(c_{2,s-i+2}+(i-s)\sum\limits_{t=2}^{i-1}a_{t,s-i+t+1})e_s\\
{} &+(c_{2,n-i+1}+a_{i-1,0}+(i-n+1)\sum\limits_{t=2}^{i-1}a_{t,n-i+t})e_{n-1}\\
{} &+(c_{2,n-i+2}+a_{i-2,0}-a_{i-1,1}+(i-n)\sum\limits_{t=2}^{i-1}a_{t,n-i+t+1})e_n, \quad 4\leq i\leq n-2,\\
 \varphi(e_{n-1},x) &=(a_{n-3,0}-a_{n-2,1})e_1+\sum\limits_{s=2}^{n-3}(n-1-s)\sum\limits_{t=2}^{s}a_{n+t-s-2,t}e_s\\
 {}& +\big(\sum\limits_{t=2}^{n-2}a_{t,t}+\frac{(n-2)(n-3)}{2}d_{1,0}\big)e_{n-2}
 +(c_{2,2}+a_{n-2,0}-(n-3)d_{1,1})e_{n-1}\\
 {} & +\big(c_{2,3}+a_{n-3,0}-a_{n-2,1}-\sum\limits_{t=2}^{n-2}a_{t,t+2}\big)e_n,\\
\varphi(e_{n},x) & =a_{n-1,0}x+ \sum\limits_{s=2}^{n-2}(n-s)\sum\limits_{t=2}^{s}a_{n+t-s-1,t}e_s+\big(\sum\limits_{t=2}^{n-1}a_{t,t}+\frac{(n-2)(n-3)}{2}d_{1,0}+a_{n-1,0}\big)e_{n-1}\\
{} &+(c_{2,2}+a_{n-2,0}-a_{n-1,1}-(n-2)d_{1,1})e_n,\\
\varphi(x,e_1) &=d_{1,0}x+d_{1,1}e_1+a_{1,3}e_2+ \sum\limits_{i=3}^{n}d_{1,i}e_i,\\
\varphi(x,e_2)& =-b_{2,3}e_1-b_{1,1}e_{n-1},\\
\varphi(x,e_3) &=(a_{2,1}+b_{1,1})e_1+a_{2,0}e_{n-1},\\
\varphi(x,e_i) &=(a_{i-1,1}-a_{i-2,0})e_1+a_{i-1,0}e_{n-1}, \quad 4\leq i\leq n,\\
\varphi(x,x) & = -a_{n-2,0}x + (a_{n-3,0}-a_{n-2,1})e_1 +
\sum\limits_{i=2}^{n-3}\big((i-n+1)(d_{1,i+1}-a_{1,i+2})-\sum\limits_{t=2}^{i}a_{n-i+t-2,t}\big)e_i\\
{} & +\big(a_{1,n}-d_{1,n-1}-\frac{n^2-5n+10}{2}d_{1,0}-\sum\limits_{t=2}^{n-2}a_{t,t}\big)e_{n-2}+
(a_{n-1,n}+d_{1,1}-c_{1,n})e_{n-1}+\beta_ne_n.
\end{align*}

\end{thm}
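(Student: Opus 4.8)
The plan is to determine an arbitrary $2$-cocycle $\varphi\in ZL^2(R_4,R_4)$ by systematically exploiting the cocycle condition \eqref{E.Z2}, exactly as in the proof of Theorem~\ref{thR_3}. I would begin by writing $\varphi$ in full generality on the generators: set $\varphi(e_i,e_1)=a_{i,0}x+\sum_{s=1}^{n}a_{i,s}e_s$ for $1\leq i\leq n$, and introduce analogous unknown coefficients $b_{1,i},b_{2,i},c_{1,i},c_{2,i},d_{1,i},d_{2,i},\beta_i$ for the values $\varphi(e_1,e_2)$, $\varphi(e_2,e_2)$, $\varphi(e_1,x)$, $\varphi(e_2,x)$, $\varphi(x,e_1)$, $\varphi(x,e_2)$, $\varphi(x,x)$. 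The strategy is then to feed carefully chosen triples into $(d^2\varphi)(x,y,z)=0$ and read off the constraints, using the multiplication table of $R_4$ from Theorem~\ref{thm33} throughout.

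I would proceed in the same order as for $R_3$. First, triples of the form $(e_i,e_1,e_1)$ and $(e_i,e_j,e_k)$ with $2\leq j,k\leq n$ force $[e_i,\varphi(e_1,e_1)]=0$ and $[e_i,\varphi(e_j,e_k)]=0$, pinning $\varphi(e_1,e_1)$ and all $\varphi(e_j,e_k)$ into $\spann\langle e_2,\dots,e_n\rangle$ and giving the stated form of $\varphi(e_1,e_1)$. Next, the triples $(e_1,e_1,e_2)$ and $(e_i,e_1,e_2)$ yield a recursion $\varphi(e_{i+1},e_2)=[e_i,\varphi(e_1,e_2)]+[\varphi(e_i,e_2),e_1]$ that determines all $\varphi(e_i,e_2)$ by induction on $i$; the mixed triples $(e_i,e_j,e_1)$ similarly give $\varphi(e_i,e_{j+1})$ in terms of $\varphi(e_i,e_j)$, producing the formulas for $\varphi(e_1,e_j)$, $\varphi(e_i,e_3)$, and $\varphi(e_i,e_j)$. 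Then the triples involving $x$—namely $(e_1,x,e_1)$, $(e_i,e_1,x)$, $(e_n,e_1,x)$, $(x,e_i,e_1)$, and the final ones $(e_1,x,x)$, $(x,e_1,x)$, $(x,e_2,x)$—determine the $\varphi(e_i,x)$, $\varphi(x,e_i)$, and $\varphi(x,x)$ components, along with scalar relations fixing parameters such as $c_{1,0}$, $d_{1,i}$, $\beta_i$ in terms of the surviving free parameters. The key structural difference from $R_3$ is the presence of the extra products $[e_1,x]=e_1+e_n$ and $[x,x]=-e_{n-1}$; these inject $e_n$- and $e_{n-1}$-terms into the recursions, which is the source of the $e_{n-1}$ corrections visible in the stated formulas (e.g.\ in $\varphi(e_1,e_2)$, in the $\varphi(x,e_i)$, and in the $d_{1,0}$-dependent coefficients).

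The main obstacle will be the bookkeeping for the index-$(n-1)$ and index-$n$ boundary cases. Because $R_4$ has $[x,x]=-e_{n-1}$ and $[e_1,x]=e_1+e_n$, the generic recursion formulas degenerate near $i=n-1,n$, so the components $\varphi(e_{n-1},x)$ and $\varphi(e_n,x)$ must be computed separately (as reflected in the theorem statement, which singles them out), and the consistency conditions coming from $(d^2\varphi)(e_n,e_1,x)=0$ and $(d^2\varphi)(e_{n-1},e_1,x)=0$ must be matched against the inductively-derived general expression. I expect these terminal compatibility equations to be where all the accumulated $d_{1,0}$-weighted sums $\tfrac{(n-i)(n-\dots)}{2}d_{1,0}$ are forced, and verifying that the two independent derivations of, say, the $e_{n-1}$-coefficient of $\varphi(x,x)$ agree is the delicate point. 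Once every triple has been checked and shown to impose no further relations beyond those already recorded, the surviving parameters yield precisely the displayed general form, completing the proof.
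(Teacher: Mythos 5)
Your proposal follows exactly the route the paper takes: the paper's own proof of this theorem consists of the single remark that one applies the same arguments as in the proof of Theorem~\ref{thR_3}, and your outline is a faithful (and more explicit) expansion of that scheme, correctly identifying the only structural differences of $R_4$ (the products $[e_1,x]=e_1+e_n$ and $[x,x]=-e_{n-1}$) as the source of the boundary corrections at indices $n-1$ and $n$. This matches the paper's intent, so no further comparison is needed.
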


\begin{proof}

The proof of this theorem is carrying out by applying similar
arguments as in the proof of Theorem~\ref{thR_3}.

\end{proof}

\begin{rem}\label{remR_4}
 It should be noted that in the case
$n=3$ an arbitrary $2$-cocycle for the algebra $R_4$ is different from the description of Theorem~\ref{thmR_4} and it has the form:
\begin{align*}
\varphi(e_1,e_1) &= a_{1,3}e_3,\\
\varphi(e_2,e_1) &= a_{2,0}x+ \sum\limits_{s=1}^{3}a_{i,s}e_s,\\
\varphi(e_3,e_1) & = a_{2,0}e_1-(a_{2,2}-d_{1,0})e_3,\\
\varphi(e_1,e_2) & = b_{1,1}e_1+b_{1,1}e_3,\\
\varphi(e_2,e_2) & = b_{2,3}e_3,\\
\varphi(e_3,e_2) & = b_{1,1}e_3,\\
\varphi(e_1,e_3) & = -a_{2,0} e_1 - a_{2,0}e_3,\\
\varphi(e_2,e_3) & = -(a_{2,1}+ b_{1,1})e_3,\\
\varphi(e_3,e_3) & = -a_{2,0} e_3,\\
\varphi(e_1,x) & = (a_{2,0} - d_{1,0}) x + (a_{2,1} + b_{1,1}- d_{1,1}) e_1 + (a_{2,2} - a_{1,3}) e_2- c_{1,3}e_3,\\
\varphi(e_2,x) & = b_{2,3}e_1 + c_{2,2} e_2+ c_{2,3}e_3,\\
\varphi(e_3,x) & = a_{2,0} x + (a_{2,2} + a_{2,0}) e_2-(a_{2,1}+ c_{2,2} - d_{1,1})e_3,\\
\varphi(x,e_1) & = d_{1,0} x + d_{1,1} e_1 +  (a_{1,3} -2d_{1,0}) e_2 + d_{1,3}e_3,\\
\varphi(x,e_2) & =  b_{2,3} e_1 - b_{1,1} e_2, \\
\varphi(x,e_3) & =  (a_{2,1}+b_{1,1}) e_1 + a_{2,0} e_2,\\
\varphi(x,x) & =  b_{1,1} x + b_{2,3} e_1 +  (a_{2,3} -c_{1,3}+d_{1,1}) e_2 + \beta_3 e_3.
\end{align*}
\end{rem}

\begin{cor} $\dim ZL^2(R_4, R_4) = (n+1)^2 - 2, $ $\dim HL^2(R_4, R_4) = 1$
 and the adjoint class  $\overline{\rho}$ forms a basis of $HL^2(R_4,
R_4)$, where
\[\rho: \left\{\begin{array}{l}
\rho(e_1,x) = e_1, \\[1mm]
\rho(e_i,x) = (i-2)e_i,\ 3\leq i\leq n, \\[1mm]
\rho(x,e_1) = -e_1,\\[1mm]
\rho(x,x) = -e_{n-1}.
\end{array} \right.\]
\end{cor}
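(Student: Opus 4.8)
The plan is to follow exactly the template already used for $R_3$ in Theorem~\ref{thR_3} and Proposition~\ref{prop3.10}. Since Theorem~\ref{thmR_4} already records the general form of an arbitrary $\varphi\in ZL^2(R_4,R_4)$, the dimension $\dim ZL^2(R_4,R_4)$ is computed simply by counting the genuinely free parameters in that description, all other coefficients being expressed through them. First I would read off, equation by equation, the independent parameters: the coefficients $a_{1,i}$ ($3\le i\le n$) in $\varphi(e_1,e_1)$; the $a_{i,0}$ together with $a_{i,s}$ ($2\le i\le n-1$, $1\le s\le n$) in $\varphi(e_i,e_1)$; the single parameters $b_{1,1}$, $b_{2,3}$, $c_{1,n}$, $\beta_n$; the family $c_{2,i}$ ($2\le i\le n$) in $\varphi(e_2,x)$; and $d_{1,0}$, $d_{1,1}$ together with $d_{1,i}$ ($3\le i\le n$) in $\varphi(x,e_1)$. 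Every other coefficient occurring in Theorem~\ref{thmR_4} (for instance all those in $\varphi(e_n,e_1)$, in $\varphi(e_i,x)$ for $i\ge 3$, and in $\varphi(x,x)$) is a fixed linear combination of these and contributes nothing new.

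Summing these contributions gives $(n-2)+(n-2)(n+1)+4+(n-1)+2+(n-2)=n^2+2n-1=(n+1)^2-2$, which is the asserted value of $\dim ZL^2(R_4,R_4)$. Combining this with the equality $\dim BL^2(R_4,R_4)=(n+1)^2-3$ from the corollary to Proposition~\ref{pr1}, I obtain $\dim HL^2(R_4,R_4)=\dim ZL^2(R_4,R_4)-\dim BL^2(R_4,R_4)=1$.

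It then remains only to produce a single cocycle outside $BL^2(R_4,R_4)$, because $\dim HL^2=1$ forces its class to be a basis. The candidate is $\rho$: matching $\rho$ against Theorem~\ref{thmR_4} shows it is precisely the cocycle obtained by setting $d_{1,1}=-1$ and all other free parameters equal to zero. Indeed, the coefficient of $e_i$ in $\rho(e_i,x)=(i-2)e_i$ matches $c_{2,2}+(2-i)d_{1,1}$, the coefficient of $e_1$ in $\rho(e_1,x)=e_1$ matches $a_{n-1,1}-a_{n-2,0}-d_{1,1}$, the coefficient of $e_1$ in $\rho(x,e_1)=-e_1$ is $d_{1,1}$, and the coefficient of $e_{n-1}$ in $\rho(x,x)=-e_{n-1}$ matches $a_{n-1,n}+d_{1,1}-c_{1,n}$; so $\rho\in ZL^2(R_4,R_4)$. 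To verify $\rho\notin BL^2(R_4,R_4)$ I would reproduce the coboundary computation of Proposition~\ref{prop3.10}: introduce the endomorphisms $f_{i,j}$, form $g_{i,j}(x,y)=[f_{i,j}(x),y]+[x,f_{i,j}(y)]-f_{i,j}([x,y])$, discard the three combinations spanning a complement of $\Der(R_4)$ (whose basis $d_1,d_2,d_3$ is given in Proposition~\ref{pr1}), and check that the resulting spanning set of $BL^2(R_4,R_4)$ never isolates the $d_{1,1}$-direction carried by $\rho$. I expect this last verification to be the main obstacle: one must track every place a coboundary can generate the $d_{1,1}$ parameter and confirm it only appears coupled to other coboundary directions, exactly as in the $R_3$ case where $\psi_3$ is not a coboundary although $\psi_3+(n-2)\eta_2$ is.

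Finally, the value $n=3$ has to be treated separately, since by Remark~\ref{remR_4} the generic $2$-cocycle form for $R_4$ degenerates there. A direct count of the free parameters in the list of Remark~\ref{remR_4} yields the same values $\dim ZL^2(R_4,R_4)=(n+1)^2-2=14$ and, after the analogous coboundary analysis, $\dim HL^2(R_4,R_4)=1$ with $\overline{\rho}$ again spanning it.
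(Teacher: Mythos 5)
Your proposal is correct and follows exactly the route the paper intends: the paper states this corollary without proof because it is obtained precisely by counting the free parameters in Theorem~\ref{thmR_4} (which indeed gives $n^2+2n-1=(n+1)^2-2$), subtracting $\dim BL^2(R_4,R_4)=(n+1)^2-3$ from the corollary to Proposition~\ref{pr1}, and then running the coboundary analysis of Proposition~\ref{prop3.10} with the $f_{i,j}$ and $g_{i,j}$ to see that the $d_{1,1}$-direction (which is $\rho$ up to sign) is never isolated in $BL^2(R_4,R_4)$. Your identification of $\rho$ as the cocycle with $d_{1,1}=-1$ and all other parameters zero checks out against every line of Theorem~\ref{thmR_4}, so the argument is the same as the paper's.
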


\begin{thm}
 An arbitrary $\varphi\in ZL^2\big(R_2(\alpha), R_2(\alpha)\big)$ has the following form:
\begin{align*}
\varphi(e_1,e_1) &= \sum\limits_{i=2}^{n}a_{1,i}e_i,\\
\varphi(e_i,e_1) &= \sum\limits_{s=1}^{n}a_{i,s}e_s+a_{i,0}x, \ 2\leq i\leq n-1, \\
\varphi(e_n,e_1) &=a_{n-1,0}e_1-\sum\limits_{i=3}^{n-1}\big(\sum\limits_{t=2}^{i-1}a_{n+t-i,t}\big)e_i+\Big(\frac{(n-1)(2\alpha+n)}{2}d_{1,0}-\sum\limits_{t=2}^{n-1}a_{t,t}\Big)e_n,\\
\varphi(e_1,e_2) &=b_{1,1}e_1+b_{1,n}e_n,\\
\varphi(e_i,e_2) &= \big((i-2)b_{1,1}+b_{2,2}\big)e_i+b_{2,3}e_{i+1}, \quad 2\leq i\leq n, \\
\varphi(e_1,e_i) &=-a_{i-1,0}e_1, \ 3\leq i\leq n,\\
\varphi(e_i,e_3) &= -(\alpha+i-1)a_{2,0}e_i-(a_{2,1}+b_{1,1})e_{i+1}, \quad 2\leq i\leq n,  \\
\varphi(e_i,e_j) &= -(\alpha+i-1)a_{j-1,0}e_i+(a_{j-2,0}-a_{j-1,1})e_{i+1}, \quad 2\leq i\leq n, \quad 4\leq j\leq n, \displaybreak \\
\varphi(e_1,x) &=-d_{1,1}e_1+\sum\limits_{i=2}^{n-1}(\alpha+i-2)a_{1,i+1}e_i+c_{1,n}e_n-d_{1,0}x,\\
\varphi(e_2,x) &=-\alpha b_{2,3}e_1+\sum\limits_{i=2}^{n}c_{2,i}e_i-(\alpha+1)b_{1,1}x,\\
\varphi(e_3,x) &=(\alpha+1)(a_{2,1}+b_{1,1})e_1+(a_{2,2}-(\alpha+1)d_{1,0})e_2+(c_{2,2}-d_{1,1})e_3\\
{} &+\sum\limits_{i=4}^{n}(c_{2,i-1}+(3-i)a_{2,i})e_i+(\alpha+2)a_{2,0}x,\\
\varphi(e_i,x) & = (\alpha+i-2)(a_{i-1,1}-a_{i-2,0})e_1+\sum\limits_{s=2}^{i-2}(i-s)\sum\limits_{t=2}^{s}a_{i+t-s-1,t}e_s \\
{}& + \Big(\sum\limits_{t=2}^{i-1}a_{t,t}-\frac{(2\alpha+i-1)(i-2)}{2}d_{1,0}\Big)e_{i-1}+(c_{2,2}-(i-2)d_{1,1})e_i\\
{}&+\sum\limits_{s=i+1}^{n}(c_{2,s-i+2}+(i-s)\sum\limits_{t=2}^{i-1}a_{t,s-i+t+1})e_s+(\alpha+i-1)a_{i-1,0}x, \quad 4\leq i\leq n, \\
\varphi(x,e_1) &=\sum\limits_{i=1}^{n}d_{1,i}e_i+d_{1,0}x,\\
\varphi(x,e_2) &=-b_{2,3}e_1-b_{1,n}e_{n-1},\\
\varphi(x,e_3) &=(a_{2,1}+b_{1,1})e_1, \\
\varphi(x,e_i) &=(a_{i-1,1}-a_{i-2,0})e_1, \quad 4\leq i\leq n, \\
\varphi(x,x) &= \sum\limits_{i=2}^{n-2}(\alpha+i-1)(d_{1,i+1}-a_{1,i+2})e_i+\big((\alpha+n-2)d_{1,n}-c_{1,n}\big)e_{n-1}+\beta_ne_n
\end{align*}
with restrictions
\begin{equation}
\label{eq3.1}\left\{\begin{array}{ccc}(\alpha-1)a_{1,2}=0,&
 (\alpha+1)b_{1,n}=0, &
(\alpha+1)(b_{2,2}-(\alpha+1)b_{1,1})=0,\\[1mm]
(n-3)b_{1,n}=0, & \alpha (d_{1,2}-
a_{1,3})=0.\end{array}\right.
\end{equation}
\end{thm}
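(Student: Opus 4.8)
The plan is to mirror the strategy of the proof of Theorem~\ref{thR_3}: posit a completely general bilinear map on basis pairs, with expansions such as
\[
\varphi(e_i,e_1)=a_{i,0}x+\sum_{s=1}^{n}a_{i,s}e_s,\qquad
\varphi(e_1,x)=c_{1,0}x+\sum_{s=1}^{n}c_{1,s}e_s,
\]
together with analogous ansätze on $(e_1,e_2)$, $(e_2,e_2)$, $(e_2,x)$, $(x,e_1)$, $(x,e_2)$ and $(x,x)$, and then to force the cocycle identity \eqref{E.Z2} on a carefully ordered sequence of triples of basis vectors. Each vanishing condition $(d^2\varphi)(\cdot,\cdot,\cdot)=0$ either expresses one value of $\varphi$ through earlier ones or imposes a linear relation on the parameters; accumulating these collapses the initial bulk of unknowns down to the free parameters displayed in the statement.

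First I would exploit the right-annihilator structure of $R_2(\alpha)$: since $[e_i,e_k]=0$ for all $k\ge 2$ and $[e_1,e_1]=0$, the triples $(e_i,e_1,e_1)$ and $(e_i,e_j,e_k)$ with $j,k\ge 2$ reduce to $[e_i,\varphi(e_1,e_1)]=0$ and $[e_i,\varphi(e_j,e_k)]=0$, forcing $\varphi(e_1,e_1)$ and all $\varphi(e_j,e_k)$ into $\spann\left<e_2,\dots,e_n\right>$. Next, $(e_1,e_1,e_2)$ and $(e_i,e_1,e_2)$ pin down $\varphi(e_1,e_2)$ and, by induction on $i$ through the generating relation $[e_i,e_1]=e_{i+1}$, the whole family $\varphi(e_i,e_2)$. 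I would then run the mixed triples $(e_1,x,e_1)$, $(e_i,e_1,x)$ and $(e_n,e_1,x)$ to recover the $x$-directional values $\varphi(e_i,x)$ recursively, and the triples $(e_i,e_j,e_1)$ and $(x,e_i,e_1)$ to obtain $\varphi(e_i,e_j)$ and $\varphi(x,e_i)$. Finally $(e_1,x,x)$, $(x,e_1,x)$ and $(x,e_2,x)$—comparatively tame here since $[x,x]=0$ in $R_2(\alpha)$—fix $\varphi(x,x)$ and close the system.

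The essential new feature compared with $R_3$ is that the operator $[e_i,x]=(i-1+\alpha)e_i$ threads the parameter $\alpha$ through virtually every recursion, so the coefficients produced along the way are polynomials in $\alpha$ (and occasionally in $n$) rather than constants. This is precisely where the restrictions \eqref{eq3.1} originate: several equations take the shape $(\text{polynomial in }\alpha)\cdot(\text{parameter})=0$, with the relevant factor vanishing only at the exceptional values $\alpha=1$, $\alpha=-1$, $\alpha=0$, or $n=3$. For example the condition on $a_{1,2}$ collapses to $(\alpha-1)a_{1,2}=0$, the condition on $b_{1,n}$ to $(\alpha+1)b_{1,n}=0$ together with $(n-3)b_{1,n}=0$, and the triple controlling $d_{1,2}$ to $\alpha(d_{1,2}-a_{1,3})=0$. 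I expect the main obstacle to be bookkeeping: one must keep the $\alpha$-dependent prefactors intact—resisting the temptation to divide through by them—so that the degenerate cases are not silently lost, and then verify that after all substitutions the residual equations are exactly \eqref{eq3.1} and nothing stronger. Carrying the induction in $i$ simultaneously with the symbolic parameter $\alpha$, rather than specializing early, is what makes the description uniform across all $\alpha$.
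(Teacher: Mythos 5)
Your proposal is correct and follows essentially the same route as the paper, which itself proves this theorem only by the remark that one applies ``similar arguments as in the proof of Theorem~\ref{thR_3}'': the same ansatz, the same ordered sweep through triples, with the multiplication table of $R_2(\alpha)$ substituted throughout. Your additional observation that the $\alpha$-dependent prefactors must be kept unfactored so that the degenerate values $\alpha=0,\pm1$ and $n=3$ survive as the restrictions \eqref{eq3.1} is exactly the right point of care and matches how the paper's restrictions arise.
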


\begin{proof}
The proof of this proposition is carrying out by applying similar
arguments as in the proof of Theorem~\ref{thR_3}.
\end{proof}

From the equalities \eqref{eq3.1} it implies that if $n\neq 3$,
then $b_{1,n} =0$. Thus, we distinguish the cases $n=3$ and $n>3$.
Moreover, the general form of infinitesimal deformations also depends
on the value of $\alpha$. Therefore, we have
\begin{cor}
\begin{align*}
\dim ZL^2\big(R_2(\alpha),R_2(\alpha)\big) & = \left\{\begin{array}{ll}
                (n+1)^2 - 1,& \alpha=0;\pm 1,\\
                (n+1)^2 - 2,& \alpha\neq 0; \pm 1,
\end{array}\right. \quad \ for \ n > 3;\\
\dim ZL^2\big(R_2(\alpha),R_2(\alpha)\big) &  = \left\{\begin{array}{ll}
                15,& \alpha=0; 1,\\
                16,& \alpha= -1,\\
                14,& \alpha\neq 0;\pm 1,
\end{array}\right.\quad \ for \ n = 3.
\end{align*}
\end{cor}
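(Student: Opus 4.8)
The plan is to obtain the corollary as a pure parameter count from the explicit general form of $\varphi\in ZL^2(R_2(\alpha),R_2(\alpha))$ established in the preceding theorem, together with the linear restrictions \eqref{eq3.1}. First I would read off the free parameters occurring in that general form: the coefficients $a_{1,i}$ with $2\le i\le n$, the $a_{i,s}$ with $2\le i\le n-1$ and $0\le s\le n$, the four scalars $b_{1,1},b_{1,n},b_{2,2},b_{2,3}$, the coefficient $c_{1,n}$, the $c_{2,i}$ with $2\le i\le n$, the $d_{1,i}$ with $0\le i\le n$, and finally $\beta_n$; every remaining product is already expressed through these via the cocycle identity. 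A direct tally gives the total number of free parameters
\[ N=(n-1)+(n-2)(n+1)+4+n+(n+1)+1=(n+1)^2+2, \]
so that, before imposing \eqref{eq3.1}, the cocycle space would have dimension $(n+1)^2+2$.

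Next I would view \eqref{eq3.1} as a linear system on this parameter space and compute its rank $r$ as a function of $\alpha$ and $n$, whereupon $\dim ZL^2(R_2(\alpha),R_2(\alpha))=N-r$. The key observation is how the five equations degenerate at distinguished values: the factor $(\alpha-1)$ annihilates the first equation exactly when $\alpha=1$, the factor $\alpha$ annihilates the fifth exactly when $\alpha=0$, and the factor $(\alpha+1)$ annihilates \emph{both} the second and the third exactly when $\alpha=-1$; independently, the factor $(n-3)$ makes the fourth equation vacuous precisely when $n=3$. Since the first, third and fifth equations involve the pairwise disjoint parameter sets $\{a_{1,2}\}$, $\{b_{2,2},b_{1,1}\}$ and $\{d_{1,2},a_{1,3}\}$, each contributes rank $1$ whenever its scalar factor is nonzero; the only interaction requiring attention is that the second and fourth equations both constrain the single parameter $b_{1,n}$, so they cannot contribute more than rank $1$ jointly.

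For $n>3$ the fourth equation already forces $b_{1,n}=0$, so the $b_{1,n}$-block always has rank $1$ and the second equation is redundant; hence $r=4$ for generic $\alpha$ and $r=3$ exactly at $\alpha\in\{0,1,-1\}$, yielding $(n+1)^2-2$ and $(n+1)^2-1$ respectively. For $n=3$ the fourth equation is vacuous, so $b_{1,n}$ is governed solely by the second equation: here $r=4$ generically, $r=3$ at $\alpha=0$ and $\alpha=1$, and, crucially, only $r=2$ at $\alpha=-1$, where the second and third equations collapse simultaneously on top of an already vacuous fourth; with $N=18$ this produces $14$, $15$ and $16$. The step demanding the most care is exactly this rank bookkeeping at the overlapping and degenerate cases — above all $(n,\alpha)=(3,-1)$ — where the simultaneous collapse of two equations is responsible for the anomalous jump to $16$, and I would verify it by explicitly exhibiting, at that point, the two independent cocycle directions (the free $b_{1,n}$ and the freed $b_{2,2}$) that are absent for generic $\alpha$.
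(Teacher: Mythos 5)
Your proposal is correct and is essentially the paper's own (implicit) argument: the corollary is obtained by counting the free parameters in the general $2$-cocycle of the preceding theorem, which gives $(n+1)^2+2$, and then subtracting the rank of the linear system \eqref{eq3.1}, with the case split $n=3$ versus $n>3$ arising exactly as you describe from the factor $(n-3)$ in the fourth equation and the simultaneous collapse of the second and third equations at $\alpha=-1$.
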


\begin{cor}
\begin{align*}
\dim HL^2\big(R_2(\alpha),R_2(\alpha)\big) & = \left\{\begin{array}{ll}
                2,& \alpha=0;\pm 1; 1-n; 2-n,\\
                1,& \alpha\neq 0;\pm 1; 1-n; 2-n,
\end{array}\right. \quad \ for \ n > 3;\\
\dim HL^2\big(R_2(\alpha),R_2(\alpha)\big) & = \left\{\begin{array}{ll}
                4,& \alpha= - 1,\\
                2,& \alpha= 0; 1; -2,\\
                1,& \alpha\neq 0; \pm 1; -2,
\end{array}\right.\quad \ for \ n = 3.
\end{align*}

\end{cor}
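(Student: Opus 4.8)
The plan is to use the defining relation $HL^2\big(R_2(\alpha),R_2(\alpha)\big)=ZL^2\big(R_2(\alpha),R_2(\alpha)\big)/BL^2\big(R_2(\alpha),R_2(\alpha)\big)$, which reduces the assertion to the numerical identity
\[
\dim HL^2\big(R_2(\alpha),R_2(\alpha)\big)=\dim ZL^2\big(R_2(\alpha),R_2(\alpha)\big)-\dim BL^2\big(R_2(\alpha),R_2(\alpha)\big).
\]
Both terms on the right have already been determined: the coboundary dimension is given in the corollary following Proposition~\ref{pr1}, and the cocycle dimension in the corollary immediately preceding the present statement. Thus no new computation with cocycles is required; the proof is purely a matter of subtracting the two known formulas, organized according to the exceptional values of $\alpha$ that occur in each.

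First I would record the two ingredients separately. The space $BL^2$ loses one dimension precisely when $\alpha\in\{1-n,2-n\}$, reflecting the two extra derivations $d_4$ listed in $\Der(R_2(1-n))$ and $\Der(R_2(2-n))$ in Proposition~\ref{pr1}; otherwise $\dim BL^2=(n+1)^2-3$. The space $ZL^2$ gains a dimension when $\alpha\in\{0,\pm1\}$ for $n>3$. The row $n=3$ carries an additional subtlety coming from the restrictions \eqref{eq3.1}: the condition $(n-3)b_{1,n}=0$ becomes vacuous, so the parameter $b_{1,n}$ survives exactly when $(\alpha+1)b_{1,n}=0$ holds automatically, i.e.\ for $\alpha=-1$. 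This surviving parameter is what produces the anomalous value of $\dim ZL^2$ at $\alpha=-1$ when $n=3$.

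The key structural point is how these two exceptional sets interact. For $n>3$ one checks directly that $\{0,1,-1\}$ (governing $ZL^2$) and $\{1-n,2-n\}$ (governing $BL^2$) are disjoint, so the two case-distinctions decouple: for $\alpha\in\{0,\pm1\}$ the difference is $\big((n+1)^2-1\big)-\big((n+1)^2-3\big)=2$, for $\alpha\in\{1-n,2-n\}$ it is $\big((n+1)^2-2\big)-\big((n+1)^2-4\big)=2$, and in the remaining generic case it is $\big((n+1)^2-2\big)-\big((n+1)^2-3\big)=1$, which is exactly the stated two-valued answer. For $n=3$ the coincidences $2-n=-1$ and $1-n=-2$ glue the exceptional sets together, and $\alpha=-1$ becomes special simultaneously for $ZL^2$ and for $BL^2$; here the subtraction gives $16-12=4$. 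The remaining $n=3$ entries follow by the same arithmetic: $\alpha=0,1$ give $15-13=2$, $\alpha=-2=1-n$ gives $14-12=2$, and the generic case gives $14-13=1$.

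The main, and essentially only, obstacle is bookkeeping: one must keep straight which exceptional set each of the two formulas depends on, and verify the disjointness for $n>3$ together with the precise coincidence at $\alpha=-1$ for $n=3$. Once this is settled, every entry of the table is a one-line subtraction, so the proof concludes with no further calculation.
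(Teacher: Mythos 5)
Your proposal is correct and is exactly the argument the paper intends: the corollary is stated as an immediate consequence of the two preceding dimension counts, $\dim HL^2=\dim ZL^2-\dim BL^2$, and your case-by-case subtraction (including the observation that for $n>3$ the exceptional sets $\{0,\pm1\}$ and $\{1-n,2-n\}$ are disjoint, while for $n=3$ they collide at $\alpha=-1$ and $\alpha=-2$) reproduces every entry of the table.
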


In the following proposition similarly to the
proof of Proposition~\ref{prop3.10} we find a basis of
$HL^2\big(R_2(\alpha),R_2(\alpha)\big)$.

\begin{prop}
The basis of $HL^2\big(R_2(\alpha),R_2(\alpha)\big)$ consists of the following
adjoint classes
\[ \left\{\begin{array}{ll} \overline{\rho}, \overline{\psi_1} & \alpha = 0;1; 1-n; 2-n, \\[1mm]
\overline{\psi_1}, \overline{\psi_2}, & \alpha = -1, \\[1mm]
\overline{\rho}, & \alpha \neq 0; \pm 1; 1-n; 2-n, \\[1mm]
 \end{array}\right. \quad \ for \ n>3; \]
\[ \left\{\begin{array}{ll} \overline{\rho}, \overline{\psi_1} & \alpha = 0;-1; -2, \\[1mm]
\overline{\psi_1}, \overline{\psi_2},  \overline{\psi_3}, \overline{\psi_4}, & \alpha = -1, \\[1mm]
\overline{\rho}, & \alpha \neq 0; \pm 1; -2, \\[1mm]
 \end{array}\right. \quad \ for \ n=3, \]
where
\[\rho: \left\{\begin{array}{ll}
\rho(e_1,x) = e_1, \\[1mm]
\rho(e_i,x) = (i-2)e_i, & 3\leq i\leq n, \\[1mm]
\rho(x,e_1) = -e_1,
\end{array} \right.\]
\[\begin{array}{ll}\psi_1:\psi_1(e_i,x) = e_{i}, \quad 2\leq i\leq n, &  \psi_2:
\psi_2(e_i,e_2) = e_i,  \quad 2\leq i\leq n, \\ [1mm] \psi_3:
\left\{\begin{array}{l}
\psi_3(e_1,x) = e_n,  \\[1mm]
\psi_3(x,x) = - e_{n-1},
\end{array} \right. &  \psi_4: \left\{\begin{array}{l}
\psi_4(e_1,e_2) = e_n,  \\[1mm]
\psi_4(x,e_2) = -e_{n-1}.
\end{array} \right. \end{array}\]
\end{prop}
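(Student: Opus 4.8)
The plan is to repeat the strategy of the proof of Proposition~\ref{prop3.10}, now starting from the explicit description of $ZL^2\big(R_2(\alpha),R_2(\alpha)\big)$ established just above. First I would fix a natural basis of the cocycle space by letting each free structure parameter---$a_{i,j}$, $b_{1,1}$, $b_{1,n}$, $b_{2,2}$, $b_{2,3}$, $c_{1,n}$, $c_{2,k}$, $d_{1,s}$, $\beta_n$---equal $1$ with all others zero, subject throughout to the five restrictions \eqref{eq3.1}. It is precisely these restrictions that force the case distinction: for $n>3$ and $\alpha\neq 0,\pm1$ every constraint is active and $ZL^2$ attains its minimal dimension, whereas each special value deactivates one or more of them. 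Concretely, $\alpha=1$ frees $a_{1,2}$, $\alpha=0$ frees $d_{1,2}-a_{1,3}$, $\alpha=-1$ frees both $b_{1,n}$ and $b_{2,2}-(\alpha+1)b_{1,1}$, and the anomaly $n=3$ additionally removes $(n-3)b_{1,n}=0$, so that $b_{1,n}$ survives.

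Next I would build a spanning set of $BL^2\big(R_2(\alpha),R_2(\alpha)\big)$ exactly as in Proposition~\ref{prop3.10}: introduce the elementary endomorphisms $f_{i,j}$ of $R_2(\alpha)$ and form $g_{i,j}(x,y)=[f_{i,j}(x),y]+[x,f_{i,j}(y)]-f_{i,j}([x,y])$ according to \eqref{E.B2}, then expand each $g_{i,j}$ in the cocycle basis above. By Proposition~\ref{pr1} we have $\dim\Der\big(R_2(\alpha)\big)=3$ for generic $\alpha$ and $\dim\Der\big(R_2(\alpha)\big)=4$ for $\alpha=1-n,2-n$; accordingly I would discard the $g_{i,j}$ attached to the listed derivations $d_1,d_2,d_3$ (and the extra $d_4$ in the two exceptional cases), so that the remaining $g_{i,j}$ form a basis of $BL^2$.

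The last step is the passage to the quotient. Reading off the expansions, I would identify which of the distinguished cocycles $\rho$, $\psi_1$, $\psi_2$, $\psi_3$, $\psi_4$ escape $\spann\langle g_{i,j}\rangle$. Matching parameters, $\rho$ is carried by $d_{1,1}$, $\psi_1$ by $c_{2,2}$, $\psi_2$ by $b_{2,2}$, $\psi_3$ by $c_{1,n}$ and $\psi_4$ by $b_{1,n}$; tracing the coboundary relations shows that $\overline{\rho}$ is nontrivial for every $\alpha\neq -1$ while at $\alpha=-1$ it is no longer independent and is superseded by $\overline{\psi_1},\overline{\psi_2}$, that $\overline{\psi_1}$ becomes nontrivial exactly when a constraint involving $a_{1,2}$ or $d_{1,2}-a_{1,3}$ relaxes or the derivation count jumps (that is, $\alpha=0,1,1-n,2-n$), and that $\overline{\psi_2}$ appears only at $\alpha=-1$. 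For $n=3$ the further collapse of the $b_{1,n}$ constraint promotes $\overline{\psi_3}$ and $\overline{\psi_4}$ to nontrivial classes as well, which is the source of the four-dimensional cohomology at $\alpha=-1=2-n$.

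I expect the principal difficulty to be the bookkeeping of this case analysis rather than any single computation. The special values $\alpha\in\{0,\pm1,1-n,2-n\}$ and the dimension $n=3$ each perturb either one of the constraints \eqref{eq3.1} or the derivation count, and several of these coincidences overlap---most dramatically at $\alpha=-1$, $n=3$, where $2-n=-1$ makes $\dim\Der$ jump at the same moment that the $b_{1,n}$ and $b_{2,2}$ constraints disappear. The delicate point is to disentangle these effects so that, in every case, the proposed representatives are simultaneously verified to be cocycles, to be linearly independent modulo $BL^2$, and to match the dimension count $\dim HL^2=\dim ZL^2-\dim BL^2$ recorded in the preceding corollaries, echoing the exceptional behaviour already noted for $R_4$ in Remark~\ref{remR_4}.
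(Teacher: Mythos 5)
Your proposal follows exactly the route the paper itself indicates (it proves this proposition ``similarly to the proof of Proposition~\ref{prop3.10}''): take the natural parameter basis of $ZL^2\big(R_2(\alpha),R_2(\alpha)\big)$ subject to the restrictions \eqref{eq3.1}, expand the coboundaries $g_{i,j}$ built from elementary endomorphisms, and read off which of $\rho,\psi_1,\psi_2,\psi_3,\psi_4$ survive modulo $BL^2$, with the case split driven by \eqref{eq3.1} and the jump in $\dim\Der$ at $\alpha=1-n,2-n$. Your parameter identifications ($\rho\leftrightarrow d_{1,1}$, $\psi_1\leftrightarrow c_{2,2}$, $\psi_2\leftrightarrow b_{2,2}$, $\psi_3\leftrightarrow c_{1,n}$, $\psi_4\leftrightarrow b_{1,n}$) check out against the stated general $2$-cocycle, and the dimension bookkeeping matches the preceding corollaries, so this is correct and essentially the paper's argument.
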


\begin{thm}
 An arbitrary $2$-cocycle of $ZL^2\big(R_5(\alpha_{4},\alpha_{5}, \dots,\alpha _{n}),R_5(\alpha_{4},\alpha_{5}, \dots,\alpha _{n})\big)$ has the following form:
 \begin{align*}
\varphi(e_1,e_1) &= a_{2,0} x+a_{2,1} e_{1} +(a_{2,2} -\alpha_{n} a_{n,2} )e_{2} +\sum\limits_{i=3}^{n}a_{1,i} e_{i},\\
\varphi (e_{i} ,e_{1} )& =a_{i,0} x+\sum\limits_{s=1}^{n}a_{i,s} e_{s}, \quad  2 \leq i \leq n-1,\\
\varphi (e_{n} ,e_{1} )& =\sum\limits_{i=2}^{n}a_{n,i} e_{i},\\
\varphi (e_{1} ,e_{2} )&= (-c_{0} +\sum\limits_{j=4}^{n}\alpha_{j} a_{j-1,0})e_{2} +(-c_{1}
+\sum\limits_{j=4}^{n}\alpha _{j} a_{j-1,1})e_{3} +\sum\limits_{i=4}^{n-1}\alpha_{i} (-c_{0} +\sum\limits_{j=4}^{n}\alpha _{j} a_{j-1,0} ) e_{i}, \\
\varphi (e_{i} ,e_{2} )& =(-c_{0} +\sum\limits_{j=4}^{n}\alpha_{j}
a_{j-1,0})e_{i} +(-c_{1} +\sum\limits_{j=4}^{n}\alpha_{j}
a_{j-1,1} )e_{i+1} +\sum\limits_{k=i+2}^{n}\alpha_{i} (-c_{0}
+ \sum\limits_{j=4}^{n}\alpha_{j} a_{j-1,0} ) e_{k}, \quad 2 \le i\le n,\\
\varphi (e_{1} ,e_{j} )&=-a_{j-1,0} e_{2} - a_{j-1,1} e_{3}
-a_{j-1,0} \sum\limits_{k=4}^{n-1}\alpha _{k} e_{k},  \quad  3\le j\le n,\\
\varphi (e_{i} ,e_{j} )&= -a_{j-1,0} e_{i} -a_{j-1,1} e_{i+1}
-a_{j-1,0} \sum\limits_{k=i+2}^{n}\alpha _{k-i+2} e_{k},  \quad 2\le i\le n, \quad 3\le j\le n, \\
\varphi (e_{1} ,x) & = (c_{0} -\alpha _{n} a_{n-1,0} )x+(c_{1} -d_{2} -\alpha _{n} a_{n-1,1} )e_{1} \\
{} & +(c_{2} +a_{1,3} -a_{2,3} +d_{2} +\alpha _{n} a_{n-1,1} +\alpha _{n} a_{n,3} )e_{2}
+\big(c_{3} +a_{1,4} -a_{2,4} +\alpha _{n} (a_{n,3} -\alpha _{4} a_{n,2})\big)e_{3} \\
{} & + \sum\limits_{i=4}^{n-1}(c_{i} +a_{1,i+1} -a_{2,i+1} +\alpha_{n} (a_{n,i} -\alpha _{i+1} a_{n,2} )
+\sum\limits_{j=4}^{i}\alpha_{j} (a_{1,i-j+3} -a_{2,i-j+3} ) ) e_{i} +c_{n+1} e_{n},\displaybreak \\
\varphi (e_{2},x) & =c_{0} x+\sum\limits_{i=1}^{n}c_{i} e_{i},\\
\varphi (x,e_{1} ) & =-d_{2} e_{1} +\sum\limits_{i=2}^{n}d_{i} e_{i}, \\
\varphi (e_{3},x)& =(a_{2,0} +\sum\limits_{j=4}^{n}\alpha _{j}a_{j,0} )x+(a_{2,1}
+\sum\limits_{j=4}^{n-1}\alpha _{j} a_{j,1})e_{1} +(-a_{2,1} +\sum\limits_{j=4}^{n}\alpha _{j} a_{j,2})e_{2}\\
{} & +(c_{1} +c_{2} +\sum\limits_{j=4}^{n}\alpha _{j} a_{j,3}  )e_{3}
+\sum\limits_{k=4}^{n}(c_{k-1} -a_{2,1} \alpha _{k}
-\sum\limits_{j=2}^{k-2}\alpha _{k-j+2} a_{2,j}
+\sum\limits_{j=4}^{n}\alpha _{j} a_{j,k}  )e_{k}  +a_{2,1} \alpha
_{n} e_{n}, \\
\varphi (e_{i+1} ,x)& =(a_{i,0} +\sum\limits_{j=i+2}^{n}\alpha_{j-i+2} a_{j,0}  )x+(a_{i,1}
+\sum\limits_{j=i+2}^{n-1}\alpha_{j-i+2} a_{j,1} )e_{1} +(-a_{i,1}
+\sum\limits_{j=i+2}^{n}\alpha_{j-i+2} a_{j,2} )e_{2}\\
{} & +\sum\limits_{k=3}^{i-1}(\sum\limits_{j=i+1}^{n-1}\alpha _{j-i+k}a_{j,1}
+\sum\limits_{s=2}^{k-1}\sum\limits_{j=i+1}^{n}\alpha_{j+k-i-s+2} a_{j,s} +\sum\limits_{j=i+2}^{n}\alpha_{j-i+2}a_{j,k})e_{k}\\
{} & +(\sum\limits_{j=i+1}^{n-1}\alpha _{j} a_{j,1}
+\sum\limits_{s=2}^{i-1}\sum _{j=i+1}^{n}\alpha _{j-s+2} a_{j,s}
+\sum\limits_{j=i+2}^{n}\alpha _{j-i+2} a_{j,i})e_{i} \\
{} & +\big(c_{1} +c_{2} -(i-2)d_{2} -\sum\limits_{j=3}^{i}\alpha _{j+1}(a_{j,1} +a_{j,2} )
+\sum\limits_{s=3}^{i}\sum\limits_{j=i+1}^{n}\alpha_{j-s+3}
a_{j,s} +\sum\limits_{j=i+2}^{n}\alpha_{j-i+2} a_{j,i+1} \big) e_{i+1} \\
{} & +\sum\limits_{k=i+2}^{n}\big(c_{k+1-i} +
\sum\limits_{j=2}^{i}\alpha _{k-i+j} a_{j,1} -\sum\limits_{s=2}^{i}\sum\limits_{j=2}^{k-s}\alpha _{k+4-j-s}a_{i-s+2,j}
+\sum\limits_{s=2}^{i}\sum\limits_{j=s+2}^{n}\alpha_{j-s+2} a_{j,k-i+s} \big) e_{k} \\
{} & +\alpha _{n} a_{i,1} e_{n},\\
 \varphi (x,x)&=d_{3} e_{2} +d_{4} e_{3} +\sum\limits_{i=4}^{n-2}(d_{i+1} +
 \sum\limits_{j=4}^{i}\alpha _{j} d_{i+3-j}  )e_{i}
+\big(d_{n} +\sum\limits_{j=4}^{n}\alpha _{j} d_{n+2-j}\big)e_{n-1} +\beta e_{n}.
\end{align*}
\end{thm}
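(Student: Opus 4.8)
The plan is to follow the scheme of the proof of Theorem~\ref{thR_3}: I would determine $\varphi$ on every pair of basis vectors by imposing the $2$-cocycle identity \eqref{E.Z2} on a well-chosen sequence of triples and solving the resulting recursions. First I fix a general ansatz, writing each seed value $\varphi(e_i,e_1)$ $(1\le i\le n)$, $\varphi(e_1,e_2)$, $\varphi(e_1,x)$, $\varphi(e_2,x)$, $\varphi(x,e_1)$, $\varphi(x,e_2)$ and $\varphi(x,x)$ as an arbitrary linear combination of $\{e_1,\dots,e_n,x\}$ with unknown coefficients. Since every $e_k$ with $k\ge 2$ lies in $\Ann_r(R_5)$, the only products having a right argument other than $e_1$ or $x$ vanish; this forces most right-hand terms in \eqref{E.Z2} to drop out and keeps the bookkeeping manageable.

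The first block of relations comes from triples lying inside the nilradical. Evaluating \eqref{E.Z2} on $(e_i,e_1,e_1)$ gives $[e_i,\varphi(e_1,e_1)]=-\varphi(e_i,e_3)$; note that, unlike in the $R_3$ case, the term $[e_1,e_1]=e_3$ is nonzero, so $\varphi(e_1,e_1)$ is coupled to $\varphi(e_i,e_3)$ rather than being annihilated outright, and this is what produces the $x$- and $e_1$-components of $\varphi(e_1,e_1)$. Evaluating \eqref{E.Z2} on $(e_i,e_j,e_k)$ with $j,k\ge 2$ gives $[e_i,\varphi(e_j,e_k)]=0$, hence $\varphi(e_j,e_k)\in\spann\langle e_2,\dots,e_n\rangle$. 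I then pass to the recursions: from $(d^2\varphi)(e_i,e_1,e_2)=0$ one gets $\varphi(e_{i+1},e_2)=[e_i,\varphi(e_1,e_2)]+[\varphi(e_i,e_2),e_1]$, and from $(d^2\varphi)(e_i,e_j,e_1)=0$ one gets $\varphi(e_i,e_{j+1})=[\varphi(e_i,e_j),e_1]-[e_i,\varphi(e_j,e_1)]-\varphi([e_i,e_1],e_j)$. Solving these by induction produces the displayed closed forms for $\varphi(e_1,e_j)$, $\varphi(e_i,e_2)$ and $\varphi(e_i,e_j)$.

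The second, and heavier, block treats the mixed triples with $x$. The identities $(d^2\varphi)(e_1,x,e_1)=0$ and $(d^2\varphi)(e_i,e_1,x)=0$ yield the recursion $\varphi(e_{i+1},x)=[e_i,\varphi(e_1,x)]-[\varphi(e_i,e_1),x]+[\varphi(e_i,x),e_1]+\varphi(e_i,[e_1,x])+\varphi([e_i,x],e_1)$, whose solution gives the long expressions for $\varphi(e_{i+1},x)$. Finally the triples $(x,e_i,e_1)$, $(e_1,x,x)$, $(x,e_1,x)$ and $(x,e_2,x)$ determine $\varphi(x,e_j)$ and $\varphi(x,x)$ and close up the remaining relations among the free parameters $a_{i,s},c_0,\dots,c_{n+1},d_2,\dots,d_n,\beta$.

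I expect the main obstacle to be precisely the non-diagonal action of $x$. Because $[e_1,x]=e_2+\sum_{i}\alpha_i e_i$ and $[e_i,x]=e_i+\sum_{j}\alpha_{j-i+2}e_j$, the terms $\varphi(e_i,[e_1,x])$ and $\varphi([e_i,x],e_1)$ inject all the structure constants $\alpha_j$ into the recursion for $\varphi(e_{i+1},x)$, so each step accumulates nested sums in the $\alpha_j$. Tracking every $e_k$-component separately, and checking that the inductive solution stays consistent at the boundary triples $(e_n,e_1,x)$ and $(x,x,x)$, is where essentially all of the work lies; there is no conceptual difficulty beyond that of Theorem~\ref{thR_3}, only a considerably more intricate bookkeeping.
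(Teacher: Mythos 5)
Your proposal is correct and follows essentially the same route as the paper, whose own proof of this theorem is just the one-line remark that one applies the same arguments as in the proof of Theorem~\ref{thR_3}. You spell out that strategy accurately, including the correct recursions from the triples $(e_i,e_1,e_2)$, $(e_i,e_j,e_1)$ and $(e_i,e_1,x)$, and you correctly identify the two features that distinguish the $R_5$ computation (the nonzero product $[e_1,e_1]=e_3$, which couples $\varphi(e_1,e_1)$ to $\varphi(e_i,e_3)$, and the non-diagonal action of $x$, which feeds the $\alpha_j$ into the recursions).
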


\begin{proof}
The proof of this proposition is carrying out by applying similar
arguments as in the proof of Theorem~\ref{thR_3}.
\end{proof}

\begin{cor}
\begin{align*}
\dim ZL^2\big(R_5(\alpha_i), R_5(\alpha_i)\big) & = n^2 +3n -3,\\
 \dim HL^2\big(R_5(\alpha_i), R_5(\alpha_i)\big) & =
 \left\{\begin{array}{ll}2n-4, & \alpha_i = 0 \ \text{for  all} \ i,   \\
2n-5, & \alpha_i \neq 0 \  \text{for  some}   \ i.
\end{array}\right.
\end{align*}
\end{cor}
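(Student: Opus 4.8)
The plan is to use the defining identity $HL^2 = ZL^2/BL^2$, so that $\dim HL^2\big(R_5(\alpha_i),R_5(\alpha_i)\big) = \dim ZL^2\big(R_5(\alpha_i),R_5(\alpha_i)\big) - \dim BL^2\big(R_5(\alpha_i),R_5(\alpha_i)\big)$, and to obtain each dimension on the right separately. The coboundary dimension has in fact already been recorded earlier in the paper, so the only genuinely new ingredient is the cocycle dimension, which I read off from the immediately preceding theorem.

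First I would compute $\dim ZL^2$ by counting the free scalars in the closed-form description of a general $2$-cocycle given in the preceding theorem. That description expresses $\varphi$ on every pair of basis vectors in terms of a reduced collection of free parameters, and $\dim ZL^2$ is exactly the number of these independent scalars. Grouping them by the basis pair on which they originate, the free parameters are: $a_{1,i}$ for $3\le i\le n$ (giving $n-2$ scalars); the full family $a_{i,s}$ for $2\le i\le n-1$ and $0\le s\le n$ (giving $(n-2)(n+1)$ scalars); $a_{n,i}$ for $2\le i\le n$ (giving $n-1$ scalars); the family $c_0,c_1,\dots,c_n$ arising from $\varphi(e_2,x)$ together with the single parameter $c_{n+1}$ from $\varphi(e_1,x)$ (giving $n+2$ scalars); the parameters $d_2,\dots,d_n$ from $\varphi(x,e_1)$ (giving $n-1$ scalars); and finally $\beta$ from $\varphi(x,x)$. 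Adding these contributions yields $(n-2)+(n-2)(n+1)+(n-1)+(n+2)+(n-1)+1 = n^2+3n-3$, which is the asserted value of $\dim ZL^2$, independently of whether the $\alpha_i$ vanish.

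Next I would invoke the coboundary dimension, which follows from the derivation computation of Proposition~\ref{pr1} through the exact relation $\dim BL^2 = (n+1)^2 - \dim\Der$, valid because the coboundary operator $d\mapsto f_d$ defined in \eqref{E.B2} has kernel exactly $\Der$. The Corollary following Proposition~\ref{pr1} already records $\dim BL^2\big(R_5(\alpha_i),R_5(\alpha_i)\big) = n^2+n+1$ when all $\alpha_i=0$ and $n^2+n+2$ when some $\alpha_i\neq 0$; equivalently $\dim\Der(R_5(0))=n$ and $\dim\Der(R_5(\alpha_i))=n-1$ in the nonvanishing case, in agreement with the two forms of $\Der(R_5(\cdot))$ listed in Proposition~\ref{pr1}. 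Subtracting in each case then gives the result: when all $\alpha_i=0$, $\dim HL^2 = (n^2+3n-3)-(n^2+n+1) = 2n-4$, and when some $\alpha_i\neq 0$, $\dim HL^2 = (n^2+3n-3)-(n^2+n+2) = 2n-5$.

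The only real obstacle lies in the bookkeeping of the cocycle count. One must verify that the scalars enumerated above are genuinely free and mutually independent, that is, that the parametrization furnished by the preceding theorem carries no hidden linear relations among them---in contrast to the $R_2(\alpha)$ case, whose parametrization was subject to the explicit constraints~\eqref{eq3.1}. Since no such constraints are imposed in the $R_5$ theorem, the enumeration computes the dimension exactly rather than merely an upper bound; confirming this is the crux, and everything else reduces to the two elementary subtractions above.
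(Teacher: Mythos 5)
Your proposal is correct and follows exactly the route the paper intends: the corollary is an immediate consequence of counting the free parameters in the preceding theorem's parametrization of $ZL^2\big(R_5(\alpha_i),R_5(\alpha_i)\big)$ (namely $a_{1,i}$, $a_{i,s}$, $a_{n,i}$, $c_0,\dots,c_{n+1}$, $d_2,\dots,d_n$, $\beta$, totalling $n^2+3n-3$) and subtracting the coboundary dimensions $n^2+n+1$ and $n^2+n+2$ already recorded in the corollary to Proposition~3.1. Your arithmetic and the identification of $\dim BL^2=(n+1)^2-\dim\Der$ both check out against the paper.
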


Let us introduce the notations
\begin{align*}
\rho: &  \left\{\begin{array}{ll}
\rho(e_1,x) = e_1 - e_2, \\[1mm]
\rho(e_i,x) = (i-3)e_i, & 4\leq i\leq n, \\[1mm]
\rho(x, e_1) = e_1 - e_2,
\end{array} \right.\\
\psi_k  \, (4 \leq k \leq n-1 ): & \left\{\begin{array}{ll} \psi_k(e_1,x) = e_k,\\[1mm]
\psi_k(e_i,x) =  e_{k+i-2},& 2\leq i\leq n-k+2,
\end{array} \right.\\
\psi_n: &  \left\{\begin{array}{ll} \psi_n(e_2,x) = e_n,
\end{array} \right.\\
\varphi_{n,2}: & \left\{\begin{array}{ll}
\varphi_{n,2}(e_n,e_1) = e_2, \\[1mm]
\varphi_{n,2}(e_1,x) = -\alpha_n \sum\limits_{j=3}^{n-1}\alpha_{j+1}e_j,\\[1mm]
\varphi_{n,2}(e_i,x) = \sum\limits_{j=2}^{i-1}\alpha_{n+j+1-i}e_j, & 3\leq i\leq n-1, \\[1mm]
\varphi_{n,2}(e_n,x) = \sum\limits_{j=3}^{n-1}\alpha_{j+1}e_j,
\end{array} \right. \displaybreak \\
\varphi_{n,k} \, (3 \leq k \leq n-1): & \left\{\begin{array}{ll}
\varphi_{n,k}(e_n,e_1) = e_k, \\[1mm]
\varphi_{n,k}(e_1,x) = -\alpha_n e_k,\\[1mm]
\varphi_{n,k}(e_i,x) = \sum\limits_{j=k}^{i+k-3}\alpha_{n+j+3-i-k}e_j, & 3\leq i\leq n+2-k, \\[1mm]
\varphi_{n,k}(e_i,x) = \sum\limits_{j=k}^{n}\alpha_{n+j+3-i-k}e_j, & n+3-k\leq i\leq n-1, \\[1mm]
\varphi_{n,k}(e_n,x) = \sum\limits_{j=k+1}^{n}\alpha_{j+3-k}e_j.
\end{array} \right.
\end{align*}

\begin{prop}
The adjoint classes  $\overline{\rho}$, $\overline{\psi_k} \ (4\leq k \leq n) $ and
$\overline{\varphi_{n,k}} \ (2 \leq k \leq n-1)$ form a basis of $HL^2\big(R_5(0), R_5(0)\big)$.
 The basis of
$HL^2\big(R_5(\alpha_4, \dots, \alpha_n), R_5(\alpha_4, \dots,\alpha_n)\big)$ with $(\alpha_4, \dots, \alpha_n) \neq (0, \dots, 0)$,
is also the same except one cocycle $\overline{\psi_k}$ with
$\alpha_k \neq 0$.
\end{prop}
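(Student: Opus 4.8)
The plan is to follow the scheme of the proof of Proposition~\ref{prop3.10}: exploit the explicit description of $ZL^2\big(R_5(\alpha_i),R_5(\alpha_i)\big)$ furnished by the theorem above, extract a natural basis of the cocycle space from its free parameters, compute a spanning family of coboundaries, and then isolate those cocycle classes that survive in the quotient $HL^2=ZL^2/BL^2$. Since the dimension of $HL^2\big(R_5(\alpha_i),R_5(\alpha_i)\big)$ has already been determined in the corollary above to be $2n-4$ when all $\alpha_i=0$ and $2n-5$ otherwise, and since the number of proposed representatives is exactly $1+(n-3)+(n-2)=2n-4$ (respectively $2n-5$ after deleting one $\psi_k$), it will suffice to verify that the listed maps are genuine $2$-cocycles and that their adjoint classes are linearly independent modulo $BL^2$; spanning is then automatic by the dimension count.

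First I would confirm that $\rho$, $\psi_k$ $(4\le k\le n)$ and $\varphi_{n,k}$ $(2\le k\le n-1)$ lie in $ZL^2\big(R_5(\alpha_i),R_5(\alpha_i)\big)$. For $\rho$ and the $\psi_k$ this is read off by comparison with the general form of the theorem above, each arising by specializing its free parameters; for the $\varphi_{n,k}$ one verifies directly that the Leibniz $2$-cocycle identity \eqref{E.Z2} holds on all triples of basis vectors, the only nontrivial checks being those triples involving $e_n$, $e_1$ and $x$, where the coefficients $\alpha_j$ conspire to cancel.

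Next I would compute the coboundary space exactly as in Proposition~\ref{prop3.10}: set $e_{n+1}:=x$, consider the elementary endomorphisms $f_{i,j}$ $(1\le i,j\le n+1)$, and form the associated coboundaries
\[g_{i,j}(u,v)=[f_{i,j}(u),v]+[u,f_{i,j}(v)]-f_{i,j}([u,v])\in BL^2 .\]
These $(n+1)^2$ maps span $BL^2$, and a basis is obtained by discarding exactly $\dim\Der\big(R_5(\alpha_i)\big)$ of them, which by Proposition~\ref{pr1} equals $n$ when all $\alpha_i=0$ and $n-1$ otherwise. Expressing each surviving $g_{i,j}$ in the natural cocycle basis of $ZL^2$ then exhibits all parameter combinations that become trivial in cohomology, and by comparison one reads off that the parameters attached to $\rho$, to the $\psi_k$ and to the $\varphi_{n,k}$ cannot be produced from any combination of the $g_{i,j}$. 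Hence their classes are independent in $HL^2\big(R_5(0),R_5(0)\big)$, and with the dimension $2n-4$ this settles the first assertion.

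For the parametrized case the only structural change is that the loss of one derivation (the passage $\dim\Der=n\rightarrow n-1$, equivalently $\dim BL^2$ increasing by one) produces one additional coboundary, which turns out to be a nonzero multiple of $\psi_k$ for an index $k$ with $\alpha_k\neq0$; thus $\overline{\psi_k}$ becomes trivial and is removed, leaving the claimed $2n-5$ independent classes. The main obstacle is the bookkeeping in the third step: writing each $g_{i,j}$ over the many parameters $a_{i,s}$, $c_i$, $d_i$, $\beta$ and tracking the $\alpha$-dependent contributions hidden in the $\varphi_{n,k}$, which is where the argument becomes genuinely computational rather than formal.
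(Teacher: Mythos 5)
Your proposal is correct and follows essentially the same route as the paper: take the natural parameter basis of $ZL^2$ from the preceding theorem, express the coboundaries $g_{i,j}$ built from the elementary endomorphisms $f_{i,j}$ in that basis, and read off which classes survive, using the already-computed dimensions of $HL^2$ to conclude. One small inaccuracy: in the case $(\alpha_4,\dots,\alpha_n)\neq(0,\dots,0)$ the extra coboundary is not a multiple of a single $\psi_k$ but the combination $2\alpha_4\psi_4+3\alpha_5\psi_5+\cdots+(n-2)\alpha_n\psi_n$, so $\overline{\psi_k}$ (for some $k$ with $\alpha_k\neq0$) does not become trivial but merely linearly dependent on the remaining classes --- the resulting basis is the same as you claim.
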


\begin{proof} Since there are parameters $(a_{i,j}, c_{k},  \beta, d_{s})$ in
the general form of $2$-cocycles for the algebra $R_5(\alpha_4, \dots, \alpha_n)$, we consider the natural basis of
the space $ZL^2$ whose basis elements are obtained by the
instrumentality of these parameters.

Similarly as in the proof of Proposition~\ref{prop3.10}, we denote by
$\varphi(a_{i,j})$ the cocycle which satisfies  $a_{i,j} =1$ and all other
parameters  are equal to zero. We define such type of notation for other
parameters by notations
\[\varphi_{i,j}= \varphi(a_{i,j}), \quad \psi_{k}= \varphi(c_{k}),
 \quad \rho_{s}= \varphi(d_{1,s}), \quad \eta = \varphi(\beta),\]
where $1\leq i \leq n $, $0\leq j \leq n$, $ 0\leq k \leq n+1 $,
$2\leq s \leq n$ and
\[(i,j) \notin \{(1,0),  (1,1),  (1,2)\}.\]

To define the basis of $BL^2\big(R_5(\alpha_i),R_5(\alpha_i)\big)$
we consider the endomorphisms $f_{j,k} : R_5(\alpha_i)\rightarrow
R_5(\alpha_i)$ defined as follow
\begin{align*}
f_{i,j}(e_i) & =e_j,  \qquad 1 \leq i, j \leq n,\\
f_{i,n+1}(e_i) & =x, \qquad  \ 1 \leq i \leq n,\\
f_{n+1,j}(x) & = e_j, \qquad  1 \leq j \leq n,\\
f_{n+1,n+1}(x) &  = x,
\end{align*}
where in the expansion of endomorphisms the omitted values are assumed to be zero.

Consider  \[g_{i,j}(x,y) = [f_{i,j}(x),y] + [x,f_{i,j}(y)] -
f_{i,j}([x,y]).\]

Note that $g_{i,j} \in BL^2\big(R_5(\alpha_i), R_5(\alpha_i)\big)$ and by direct computation we
express $g_{i,j}$ via the elements $\varphi_{i,j}, \psi_{k},
\rho_{s}$ and $\eta$.

In the case of $\alpha_4=\alpha_5 = \dots = \alpha_n =0$ we obtain
that any linear combination of elements $\rho_{2}$, $\psi_{k}, 4
\leq k \leq n$ and $\varphi_{n,j}, 2 \leq j \leq n-1$ does not
belong to $BL^2\big(R_5(\alpha_i), R_5(\alpha_i)\big)$.
Therefore, the adjoint classes of these
elements form a basis of $HL^2\big(R_5(\alpha_i),R_5(\alpha_i)\big)$.

However, in the case of $(\alpha_4,\alpha_5, \dots, \alpha_n) \neq (0,
0, \dots 0)$ we obtain that \[2\alpha_4\psi_{4} +3\alpha_5\psi_{5}
+\dots+ (n-2)\alpha_n\psi_{n} \in BL^2\big(R_5(\alpha_i), R_5(\alpha_i)\big).\]

Hence, in this case we get that the basis of $HL^2\big(R_5(\alpha_i), R_5(\alpha_i)\big)$ also consists
from $\overline{\rho_2}$, $\overline{\psi_k} \ (4 \leq k \leq n) $
and $\overline{\varphi_{n,k}} \ (2 \leq k \leq n-1) $, except one
cocycle $\overline{\psi_k}$ with $\alpha_k \neq 0$.
\end{proof}

\section*{Acknowledgments}

This work was partially supported by Ministerio de Econom\'ia y Competitividad (Spain),
grant MTM2013-43687-P (European FEDER support included); by Xunta de Galicia, grant GRC2013-045 (European FEDER support included) and by the Grant No.0251/GF3 of Education and Science Ministry
of Republic of Kazakhstan.

\end{document}